\documentclass{amsart}
\NeedsTeXFormat{LaTeX2e} \ProvidesClass{proc-l}
              [1997/02/02 v1.2e PROC Author Class]

\DeclareOption*{\PassOptionsToClass{\CurrentOption}{amsart}}
\ProcessOptions

\copyrightinfo{2006}%            % copyright year
  {American Mathematical Society}% copyright holder

\newtheorem{theorem}{Theorem}[section]
\newtheorem{corollary}[theorem]{Corollary}

\newtheorem{proposition}[theorem]{Proposition}

\newtheorem{question}[theorem]{Question}
\newtheorem{remark}[theorem]{Remark}

%\numberwithin{equation}{section}

%    Absolute value notation

%    Blank box placeholder for figures (to avoid requiring any
%    particular graphics capabilities for printing this document).

\begin{document}

\title[Baire classification and Namioka property]{Baire classification of separately continuous functions and Namioka property}

%    Information for second author
\author{V.V.Mykhaylyuk}
\address{Department of Mathematics\\
Chernivtsi National University\\ str. Kotsjubyn'skogo 2,
Chernivtsi, 58012 Ukraine}
\email{vmykhaylyuk@ukr.net}
%\thanks{Support information for the second author.}

%    General info
\subjclass[2000]{Primary 54C08, 54C30, 54C05}

%\date{January 1, 1994 and, in revised form, June 22, 1994.}

\commby{Ronald A. Fintushel}

%\dedicatory{This paper is dedicated to our authors.}

\keywords{separately continuous functions, first Baire class function, Namioka space, dependence functions on $\aleph$ coordinates, Baire space, quarter-stratifiable space,Lindel\"{o}f space}

\begin{abstract}
We prove the following two results.

1. If $X$ is a completely regular space such that for every
topological space $Y$ each separately continuous function
$f:X\times Y\to\mathbb R$ is of the first Baire class, then every
Lindel\"{o}f subspace of $X$ bijectively continuously maps onto a
separable metrizable space.

2. If $X$ is a Baire space, $Y$ is a compact space and $f:X\times
Y\to\mathbb R$ is a separately continuous function which is a
Baire measurable function, then there exists a dense in $X$
$G_{\delta}$-set $A$ such that $f$ is jointly continuous at every
point of $A\times Y$ (this gives a positive answer to a question
of G.~Vera).
\end{abstract}

\maketitle
\section{Introduction}

A function $f:X\to\mathbb R$, which defined on a topological space $X$, is called {\it a function of the first Baire class}, if there exists a sequence $(f_n)^{\infty}_{n=1}$ of continuous functions $f_n:X\to\mathbb R$ such that $f(x)=\lim\limits_{n\to\infty}f_n(x)$ for every $x\in X$. For every at most countable ordinal $\alpha$ a function $f:X\to\mathbb R$ is called {\it a function of the $\alpha$ Baire class}, if there exists a sequence $(f_n)^{\infty}_{n=1}$ of functions $f_n:X\to\mathbb R$ of Baire class $<\alpha$ such that $f(x)=\lim\limits_{n\to\infty}f_n(x)$ for every $x\in X$. A function $f:X\to\mathbb R$ is called {\it Baire measurability} if $f$ is a function of $\alpha$ Baire class for some at most countable ordinal $\alpha$.

A function $f:X\to\mathbb R$ is called {\it a function of the first Lebesgue class}, if $f^{-1}(G)$ is a $F_{\sigma}$-set in $X$ for every open set $G\subseteq \mathbb R$.

Investigations of Baire and Lebesgue classification of separately continuous functions (that is functions of many variables which are continuous with respect to every variable) were started by A.Lebesgue in the classical paper [1]. These investigations were continued in papers of many mathematicians (see, for example,  [2] the literature given there).

In particular, W.~Moran and H.Rosental [3,4] proved that for a compact $X$ the following conditions are equivalent:

(i)\,\,\,for every compact space $Y$ every separately continuous function $f:X\times Y\to\mathbb R$ is a first Baire class function;

(ii)\,\,\,$X$ has the countable chain condition that is every system of pairwise disjoint open in $X$ nonempty set is at most countable.

A topological space $X$ is called {\it a Moran (weakly Moran) space}, if for every compact space $Y$ each separately continuous function $f:X\times Y\to\mathbb R$is a first Baire class function (a Baire measurable function). These notions were introduced in [5], where the following result was obtained.

\begin{theorem}\label{th:1.1} Let $X$ be a completely regular space with the countable chain condition, $Y$ be a compact, $f:X\times Y\to \mathbb R$ be a separately continuous function and $\varphi:Y\to C_p(X)$, $\varphi(y)(x)=f(x,y)$. Then the following conditions are equivalent:

$(i)$\,\,\,\,$f$ is a first Baire class function;

$(ii)$\,\,\,the space $\varphi(Y)$ is metrizable.
\end{theorem}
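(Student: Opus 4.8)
I would first record the standard reductions. Since $f$ is continuous in the second variable, $\varphi\colon Y\to C_p(X)$ is continuous, so $K:=\varphi(Y)$ is a compact subspace of $C_p(X)$; the map $g\colon X\times K\to\mathbb R$, $g(x,h)=h(x)$, is separately continuous, and $f=g\circ(\mathrm{id}_X\times\varphi)$. I would also use the elementary fact that a compact $K\subseteq C_p(X)$ is metrizable whenever some countable $A\subseteq X$ separates the points of $K$: then $h\mapsto(h(x))_{x\in A}$ is a continuous injection of $K$ into $\mathbb R^{A}$, hence a homeomorphism onto a metrizable subspace of $\mathbb R^A$.

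For $(ii)\Rightarrow(i)$ the plan is a partition-of-unity approximation, and the countable chain condition is not needed. Fixing a compatible metric $d$ on $K$, for each $n$ take a finite open cover $V_{n,1},\dots,V_{n,m_n}$ of $K$ by sets of $d$-diameter $<1/n$, points $k_{n,i}\in V_{n,i}$, and a subordinate partition of unity $\psi_{n,1},\dots,\psi_{n,m_n}\colon K\to[0,1]$ (available since $K$ is metric). Then
\[
g_n(x,h)=\sum_{i=1}^{m_n}\psi_{n,i}(h)\,k_{n,i}(x)
\]
is jointly continuous on $X\times K$, being a finite sum of products of a function of $h$ and a function of $x$ each continuous. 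If $\psi_{n,i}(h)\ne0$ then $d(h,k_{n,i})<1/n$, and since each evaluation $h\mapsto h(x)$ is uniformly continuous on the compact metric space $(K,d)$, one gets $g_n\to g$ pointwise; hence $g$, and therefore $f=g\circ(\mathrm{id}_X\times\varphi)$, is of the first Baire class.

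For $(i)\Rightarrow(ii)$ I would reduce, by the fact above, to producing a countable $A\subseteq X$ separating $K$, i.e.\ with the property that $f(a,y_1)=f(a,y_2)$ for all $a\in A$ forces $\varphi(y_1)=\varphi(y_2)$. Writing $f=\lim_n f_n$ with $f_n$ continuous, the separately continuous function $h(x,y_1,y_2)=f(x,y_1)-f(x,y_2)$ on $X\times(Y\times Y)$ is also of the first Baire class, hence of the first Lebesgue class, so $\{h\ne0\}$ is an $F_\sigma$; moreover $\varphi(y_1)=\varphi(y_2)$ exactly when $h(\cdot,y_1,y_2)\equiv0$. Writing $\{h\ne0\}=\bigcup_n D_n$ with $D_n$ closed and increasing, for each $n$ and each rational parameter I would look at the open subsets of $X$ cut out along the slices of $D_n$ on which $h(\cdot,y_1,y_2)$ is bounded away from zero, select a maximal pairwise disjoint subfamily --- countable by the countable chain condition --- choose one point in each of its members, and let $A$ be the union of all these countable sets. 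A bookkeeping argument, using complete regularity to pass from a closed slice where $h(\cdot,y_1,y_2)\ne0$ to a small open set on which it is large, should then show that this $A$ separates $K$.

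The routine half is $(ii)\Rightarrow(i)$. The real content, and the step I expect to be the main obstacle, is $(i)\Rightarrow(ii)$, specifically converting the countable chain condition into a genuine separation property: the natural witness that $h(\cdot,y_1,y_2)$ is somewhere nonzero is a closed slice of some $D_n$ sitting inside an open set, and such a slice can have empty interior, so one cannot directly reserve pairwise disjoint open subsets of $X$ for different pairs $(y_1,y_2)$. Making the maximal-disjoint-family argument output a set that actually separates $K$ --- rather than merely one whose associated open sets have dense union --- is where the interplay of the $F_\sigma$-levels $D_n$ with complete regularity must be used carefully.
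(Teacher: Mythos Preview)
The paper does not prove Theorem~\ref{th:1.1}. It is quoted in the introduction as a result obtained in~[5] (G.~Vera), and the paper only \emph{uses} it later (e.g.\ after Proposition~\ref{p:2.2}) without supplying an argument. So there is no ``paper's own proof'' to compare your proposal against.

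On the merits of your sketch: the direction $(ii)\Rightarrow(i)$ is complete and standard --- your partition-of-unity argument on the compact metric space $K=\varphi(Y)$ is exactly the Rudin-type approximation, and it does not need the countable chain condition. The direction $(i)\Rightarrow(ii)$, however, is not yet a proof. You correctly identify the target (a countable $A\subseteq X$ separating $K$) and the tool (CCC), but the ``bookkeeping argument'' you allude to is precisely the nontrivial content, and you yourself flag the gap: maximal disjoint families of open sets produced from the $F_\sigma$-levels of $\{h\ne0\}$ need not yield a separating set, because the witnesses to $h(\cdot,y_1,y_2)\ne0$ live on closed slices that may have empty interior. A cleaner route, closer in spirit to what the paper does in Proposition~\ref{p:2.5}, is to work with the approximating \emph{continuous} functions $f_n$ directly: for each $n$ use joint continuity of $f_n$ together with CCC of $X$ (and compactness of $Y$) to produce a countable $A_n\subseteq X$ such that $f_n(\cdot,y')$ and $f_n(\cdot,y'')$ agree on $X$ whenever they agree on $A_n$; then $A=\bigcup_n A_n$ separates $K$ because $f=\lim f_n$. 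This avoids the slice-with-empty-interior obstacle entirely.
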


For a topological space $X$ by $C_p(X)$ we denote the space of all continuous functions $z:X\to\mathbb R$ with the topology of pointwise convergence.

Moreover, relations between Moran spaces and Namioka property  was investigated in [5]. A mapping $f:X\times Y\to\mathbb R$ which is defined on the product $X\times Y$ of topological spaces $X$ and $Y$ {\it has Namioka property}, if there exists an everywhere dense in $X$ $G_{\delta}$-set $A\subseteq X$ such that $f$ is jointly continuous at every point of set $A\times Y$. A topological space $X$ is called {\it a Namioka space}, if for every compact space $Y$ each separately continuous functions $f:X\times Y\to\mathbb R$ has the Namioka property. The following question was formulated in [5].

\begin{question}\label{q:1.2} Is every Baire Moran space a Namioka space?
\end{question}

A topological space $X$ is called {\it a space with $B$-property ($L$-property)}, if for every topological space $Y$ each separately continuous function $f:X\times Y\to\mathbb R$ is a first Baire (Lebesgue) function.

A standard reasoning (see [6, p.394]) shows that every first Baire class function is a first Lebesgue class function. Therefore every space with $B$-property has $L$-property. Note that spaces with $B$-property were called by Rudin spaces in [7]. But this term is not very successful because W.~Rudin in [8] used partitions of the unit for proof that every separately continuous mappings which is defined on the product of a topological space and a metrizable space and valued in a locally convex space is a Baire first mapping. W.~Rudin is not considered a mapping with valued in $\mathbb R$ separately.

A development of Rudin's result leads to the following notions.

A topological space $X$ is called {\it a $PP$-space}, if there exist a sequence $$\left((h_{n,i}:i\in I_n)\right)_{n=1}^{\infty}$$ of locally finite partitions of the unit $(h_{n,i}:i\in I_n)$ on $X$ and sequence $(\alpha_n)_{n=1}^{\infty}$ of families $\alpha_n=(x_{n,i}:i\in I_n)$ of points $x_{n,i}\in X$ such that for every $x\in X$ and neighborhood $U$ of $x$ in $X$ there exists an integer $n_0\in\mathbb N$ such that $x_{n,i}\in U$, if $n\geq n_0$ and $x\in {\rm supp}h_{n,i}$, where by ${\rm supp}h$ we denote {\it the support} $\{x\in X: h(x)\ne 0\}$ of $h$.

These notion was introdused in [9]. It was obtained in [10, Theorem 1] that every $PP$-space has the $B$-property.

A topological space $X$ with the topology ${\mathcal T}$ is called {\it quarter-stratifiable}, if there exists a function $g:\mathbb N\times X\to
{\mathcal T}$ such that

$(i)\,\,\,\,X=\bigcup\limits_{x\in X} g(n,x)$ for every $n\in \mathbb N$;

$(ii)\,\,\,$ if $x\in g(n,x_n)$ for every $n\in \mathbb N$, then $x_n\to x$.

If there exists a weaker metrizable topology ${\mathcal T}$ on a qarter-stratifiable space $X$ such that all covers ${\mathcal U}_n=\{g(n,x):x\in X\}$ can be chosen ${\mathcal T}$-open, then the space $X$ is called {\it metrically quarter-stratifiable}.

These notions were introduced in [7]. Moreover, it was shown in [7] (Theorem 6.2(3)) that every metrically quarter-stratifiable space has the $B$-property.

In the connections with an investigation of properties of spaces with the $B$-property the following questions were formulated in [7, question 6.3].

\begin{question}\label{q:1.3} Is there a space with the $B$-property which is not quarter-stratifiable? Has a (compact) space $X$ the $B$-property if the set $\{(x,y): y(x)=0\}$ is a $G_{\delta}$-set in $X\times C_p(X)$? Is a compact space with $B$-property metrizable?
\end{question}

Clearly that analogical questions can be formulated for spaces with $L$-property. In particular, since it follows from ç [11, Proposition 2.1] that every quarter-stratifiable space has $L$-property, the following question arises naturally.

\begin{question}\label{q:1.4}
Is there a space with the $L$-property which is not quarter-stratifiable?
\end{question}

Note that it was shown in [7]([12]) that every topological space $X$ has the $B$-property ($L$-property) if and only if the calculation function $c_X:X\times C_p(X)\to\mathbb R$, $c_X(x,y)=y(x)$, is a first Baire (Lebesgue) function. Since the space $C_p(X)$ the countable chain condition, it follows from Theorem \ref{th:1.1} that every compact subspace of a completely regular space with the $B$-property is metrizable (this gives a positive answer to the third part of Question \ref{q:1.3}. In other words, every compact subspace of completely regular space with the $B$-property is submetrizable, that is it can be continuously bijectively mapped on a metrizable space. Moreover, every Lindel$\rm\ddot{o}$f space with the $B$-property is separable [7, Theorem 6.2.(6)]. Generally, every Lindel$\rm\ddot{o}$f subspace $M$ of completely regular space $X$ with $L$-property contained in the closure of some countable set $A\subseteq X$ [13, Proposition
4.7]. Therefore, the following question arises naturally.

\begin{question} \label{q:1.5} Let $Z$ be completely regular space with the $B$-property and $X\subseteq Z$ be a Lindel$\rm\ddot{o}$f subspace of $Z$. Does there exist a continuous bijection which defined on $X$ and valued in a separable metrizable space?
\end{question}

In this paper using dependence on some coordinates of function we give positive answers to Questions 1.2 and 1.5.

\section{Some properties of $B$-spaces and $L$-spaces}

A set $G\subseteq X$ in a topological space $X$ is called {\it functionally open}, if there exists a continuous function $\varphi:X\to [0,1]$ such that $G=\varphi^{-1}((0,1])$, and a set $E\subseteq X$ is a {\it functionally $G_{\delta}$-set}, if $E=\bigcap\limits_{n\in\mathbb N} G_n$, where $(G_n)^{\infty}_{n=1}$ is a sequence of fuctionally open in $X$ sets. The sets $X\setminus G$ and $X\setminus E$ are called {\it functionally closed} and a {\it functionally $F_{\sigma}$set} respectively.

The following propositions refine characteristic properties of spaces with $B$-property and $L$-property [7,
Theorem 6.2.(1)] and [12, Proposition 2.3], which give the possibility to consider the calculation function only.

\begin{proposition}\label{p:2.1} Let $X$ be a topological space. Then the following conditions are equivalent:

$(i)$\,\,\, $X$ has the $B$-property;

$(ii)$\,\,\,the set $E=\{(x,y): y(x)=0\}$ is a functionally $G_{\delta}$-set in $X\times C_p(X)$.
\end{proposition}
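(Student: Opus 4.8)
The plan is to use the known characterization [7, Theorem 6.2.(1)] that $X$ has the $B$-property if and only if the calculation function $c_X\colon X\times C_p(X)\to\mathbb R$, $c_X(x,y)=y(x)$, is of the first Baire class. Since $E=c_X^{-1}(0)$ and $c_X$ is separately continuous, the proposition reduces to the equivalence: $c_X$ is of the first Baire class $\iff$ $E$ is a functionally $G_\delta$-set in $X\times C_p(X)$.

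For the implication ``$\Rightarrow$'', suppose $c_X=\lim_n f_n$ pointwise with $f_n\colon X\times C_p(X)\to\mathbb R$ continuous. Then $\{c_X\ne 0\}=\{c_X>0\}\cup\{c_X<0\}$, and
\[
\{c_X>0\}=\bigcup_{m=1}^{\infty}\bigcup_{N=1}^{\infty}\bigcap_{n\ge N}\{(x,y):f_n(x,y)\ge \tfrac{1}{m}\}.
\]
Each set $\{f_n\ge 1/m\}$ is a zero-set, hence functionally closed; a countable intersection of zero-sets is again a zero-set; therefore $\{c_X>0\}$, and symmetrically $\{c_X<0\}$, is functionally $F_\sigma$. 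Thus $\{c_X\ne 0\}$ is functionally $F_\sigma$ and $E$ is functionally $G_\delta$. This is just the ``functional'' refinement of the standard fact that Baire-one functions are Lebesgue-one (cf. [6, p.394]).

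The substantial direction is ``$\Leftarrow$''. Here I would exploit the (pointwise) lattice operations on $C(X)$: for $q\in\mathbb R$ the map $T_q\colon X\times C_p(X)\to X\times C_p(X)$, $T_q(x,y)=(x,\max(y-q,0))$, is well defined (since $\max(y-q,0)\in C(X)$) and continuous (being continuous in each coordinate function, which suffices for the topology of pointwise convergence). One computes $c_X\bigl(T_q(x,y)\bigr)=\max\bigl(y(x)-q,0\bigr)$, so that
\[
T_q^{-1}(E)=\{(x,y):y(x)\le q\},
\]
and, using $y\mapsto\max(q-y,0)$ instead, one exhibits $\{(x,y):y(x)\ge q\}$ as a preimage of $E$. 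Hence, if $E$ is functionally $G_\delta$, so are $\{c_X\le q\}$ and $\{c_X\ge q\}$ for every $q$; consequently $\{c_X>q\}$, $\{c_X<q\}$, and therefore $c_X^{-1}((a,b))=\{c_X>a\}\cap\{c_X<b\}$ and, for every open $G\subseteq\mathbb R$, $c_X^{-1}(G)$, are functionally $F_\sigma$. Thus $c_X$ sends open sets to functionally $F_\sigma$-sets, and by the classical Lebesgue--Hausdorff theorem in the form valid on arbitrary topological spaces (a real-valued function whose preimages of open sets are functionally $F_\sigma$ is of the first Baire class) $c_X$ is of the first Baire class; by [7, Theorem 6.2.(1)], $X$ has the $B$-property.

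The main obstacle is the ``$\Leftarrow$'' direction, and within it the step requiring most care is the construction of the continuous truncations $T_q$ that recover every sublevel set $\{c_X\le q\}$ of $c_X$ from the single set $E$ — this is precisely the point where complete regularity of $X$ is not needed, the vector-lattice structure of $C_p(X)$ doing the work instead. The appeal to the functional Lebesgue--Hausdorff theorem is classical, but note it genuinely uses that $E$ is functionally $G_\delta$ rather than merely $G_\delta$: over a space with few continuous functions the weaker hypothesis would not suffice.
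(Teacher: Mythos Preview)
Your proof is correct and follows essentially the same route as the paper's: for $(ii)\Rightarrow(i)$ the paper also pulls back $E$ along a continuous self-map of $X\times C_p(X)$ of the form $(x,y)\mapsto(x,g\circ y)$---choosing a continuous $g$ with $g^{-1}(0)=(-\infty,a]\cup[b,+\infty)$ so as to obtain $c_X^{-1}((a,b))$ as the complement of a single preimage of $E$ rather than as an intersection of two half-line sets---and then invokes the same functional Lebesgue--Hausdorff theorem (cited there as [11, Theorem~3.8]). The $(i)\Rightarrow(ii)$ direction is likewise the same argument in a complementary presentation.
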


\begin{proof} $(i) \Rightarrow (ii)$. Since $X$ has the $B$-property, there exists a sequence $(f_n)^{\infty}_{n=1}$ of continuous functions $f_n:X\times Y\to \mathbb R$, where $Y=C_p(X)$, which pointwise on $X$ converges to the separately continuous calculation function $f=c_X$. For every $m,n\in \mathbb N$ we put $G_{mn}=f_n^{-1}((-\frac{1}{m},\frac{1}{m}))$ and $E_{mn}=\bigcup\limits_{k\geq n} G_{mk}$. Since the sets $E_{mn}$ are functionally open and $E=\bigcap\limits_{m,n\in\mathbb N} E_{mn}$, the condition $(ii)$ is true.

$(ii)\Rightarrow (i)$. According to [7, Theorem 6.2.(1)] it enough to prove that the calculation function $f=c_X$ is a first Baire class function on $X\times C_p(X)$.

Let $a,b\in \mathbb R$, $a<b$ and $g:\mathbb R\to \mathbb R$ be a continuous function such that $g^{-1}(0)=(-\infty, a]\cup [b,+\infty)$. Clearly that the mappings $\varphi: C_p(X)\to C_p(X)$, $\varphi(y)(x)=g(y(x))$, and $h:X\times C_p(X)\to X\times C_p(X)$, $h(x,y)=(x,\varphi(y))$, are continuous. Since $f^{-1}((-\infty, a]\cup [b,+\infty)) = h^{-1}(E)$, the set $f^{-1}((a,b))$ is a functionally $F_{\sigma}$-set. Therefore according to [11, Theorem 3.8] the function $f$ is a first Baire class function.
\end{proof}

\begin{proposition}\label{p:2.2} Let $X$ be a topological space. Then the following conditions are equivalent:

$(i)$\,\,\, $X$ has $L$-property;

$(ii)$\,\,\,the set $E=\{(x,y): y(x)=0\}$ is a $G_{\delta}$-set in $X\times C_p(X)$.
\end{proposition}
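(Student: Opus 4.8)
The plan is to mirror the structure of the proof of Proposition \ref{p:2.1}, replacing "functionally open" by "open" and the first Baire class characterization by the first Lebesgue class characterization. Recall that $X$ has the $L$-property if and only if the calculation function $c_X:X\times C_p(X)\to\mathbb R$ is a first Lebesgue class function, i.e.\ $c_X^{-1}(G)$ is an $F_\sigma$-set in $X\times C_p(X)$ for every open $G\subseteq\mathbb R$; this is the analogue of [7, Theorem 6.2.(1)] recorded as [12, Proposition 2.3].

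For the implication $(i)\Rightarrow(ii)$, I would argue directly. Assuming $X$ has the $L$-property, the set $c_X^{-1}(\mathbb R\setminus\{0\})$ is an $F_\sigma$-set in $X\times C_p(X)$, since $\mathbb R\setminus\{0\}$ is open. Its complement $E=c_X^{-1}(\{0\})=\{(x,y):y(x)=0\}$ is therefore a $G_\delta$-set, which is exactly $(ii)$. (One could also repeat the explicit construction from Proposition \ref{p:2.1}: write $E=\bigcap_{m,n}E_{mn}$ with $E_{mn}=\bigcup_{k\ge n}f_k^{-1}((-\frac1m,\frac1m))$, but here there is no sequence of continuous approximants to exploit, so the clean route is via the Lebesgue-class characterization.)

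For $(ii)\Rightarrow(i)$, I would follow the same pullback trick as in Proposition \ref{p:2.1}. Given $a<b$ in $\mathbb R$, pick a continuous $g:\mathbb R\to\mathbb R$ with $g^{-1}(0)=(-\infty,a]\cup[b,+\infty)$, and form the continuous maps $\varphi:C_p(X)\to C_p(X)$, $\varphi(y)(x)=g(y(x))$, and $h:X\times C_p(X)\to X\times C_p(X)$, $h(x,y)=(x,\varphi(y))$. Then $c_X^{-1}((-\infty,a]\cup[b,+\infty))=h^{-1}(E)$ is a $G_\delta$-set by continuity of $h$ and hypothesis $(ii)$, so $c_X^{-1}((a,b))$ is an $F_\sigma$-set. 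Since sets of the form $(a,b)$ form a base for the topology of $\mathbb R$ and a countable union of $F_\sigma$-sets is $F_\sigma$, it follows that $c_X^{-1}(G)$ is $F_\sigma$ for every open $G\subseteq\mathbb R$, i.e.\ $c_X$ is a first Lebesgue class function; by [12, Proposition 2.3] this gives the $L$-property. The only mild subtlety — and the closest thing to an obstacle — is checking that the continuity of $\varphi$ (hence of $h$) genuinely holds in the topology of pointwise convergence; but this is immediate, since for fixed $x$ the evaluation $y\mapsto g(y(x))$ is continuous, and pointwise convergence topology is initial with respect to all such evaluations. So the argument is essentially a routine transcription.
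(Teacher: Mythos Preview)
Your proposal is correct and follows essentially the same approach as the paper: the paper declares $(i)\Rightarrow(ii)$ to be obvious and says $(ii)\Rightarrow(i)$ is proved analogously to Proposition~\ref{p:2.1} using [12, Proposition~2.3], which is exactly the pullback-via-$h$ argument you spell out. The only (inessential) difference is that you finish by the elementary observation that every open $G\subseteq\mathbb R$ is a countable union of intervals, whereas the paper in Proposition~\ref{p:2.1} invokes an external result; in the Lebesgue-class setting your direct argument is the natural one.
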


\begin{proof} The implication $(i)\Rightarrow (ii)$ is obvious. The implication $(ii) \Rightarrow (i)$ can be proved analogously as in the previous proposition using [12, Proposition 2.3].
\end{proof}

\begin{remark}\label{r:2.3} It follows from Proposition 2.1 and 2.2 that the second part of Question 1.3 is a variant of the problem on the coincidence of Baire's and Lebesgue's classifications of separately continuous functions of two variables. According to [13, Theorem 4.12] the product of a family $(X_s:s\in S)$ of nontrivial separable linearly ordered spaces $X_s$ has $L$-property if and only if $|S|\leq 2^{\aleph_0}$. Therefore the compact space  $[0,1]^{[0,1]}$ is an example of a compact space with $L$-property which does not have $B$-property, because all compact spaces with $B$-property are metrizable. This gives the negative answer to the second part of Question 1.3.
\end{remark}

\begin{remark}\label{r:2.4} The space $X=[0,1]^{[0,1]}$ is an example of a space with $L$-property which is not a quarter-stratifiable space. This gives the negative answer to Question~1.4. Really, assuming that $X$ is a quarter-stratifiable space according to [7, Theorem 2.3], we obtain that $X$ is a metrically quarter-stratifiable space, hence has $B$-property.
\end{remark}

The following proposition refines a structure of spaces with $B$-property.

\begin{proposition}\label{p:2.5} Let $X$ be a completely regular space with $B$-property. Then every onepoint set is a $G_{\delta}$-set in $X$.
\end{proposition}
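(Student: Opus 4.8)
The plan is to use the characterization from Proposition 2.1: since $X$ has the $B$-property, the set $E=\{(x,y)\in X\times C_p(X): y(x)=0\}$ is a functionally $G_\delta$-set in $X\times C_p(X)$, say $E=\bigcap_{n\in\mathbb N} G_n$ with each $G_n$ functionally open. Fix a point $x_0\in X$. The idea is to restrict this representation to the slice $\{x_0\}\times C_p(X)$ and to a suitable one-parameter family inside $C_p(X)$ in order to pull the $G_\delta$ structure back to $X$ itself. Concretely, since $X$ is completely regular, for each point $x\in X$ with $x\ne x_0$ there is a continuous function separating $x$ from $x_0$; the aim is to encode ``$x=x_0$'' as membership of a single fixed pair-related object in $E$.

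First I would consider the continuous map $\psi:X\to X\times C_p(X)$ defined by $\psi(x)=(x_0,z_x)$ for a cleverly chosen continuous assignment — but since $x\mapsto z_x$ need not be continuous into $C_p(X)$ in general, instead I would fix a single $y_0\in C_p(X)$ and use the continuous map $x\mapsto(x,y_0)$. Here is the key construction: for $x_0\in X$, by complete regularity choose, for the moment schematically, a continuous function and observe that the evaluation $e_{x_0}:C_p(X)\to\mathbb R$, $e_{x_0}(y)=y(x_0)$, is continuous, so $E_{x_0}=\{y:y(x_0)=0\}=e_{x_0}^{-1}(0)$ is a functionally closed (indeed closed) subset of $C_p(X)$ that is also a functionally $G_\delta$-set by restricting the $G_n$. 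That alone concerns $C_p(X)$, not $X$. To transfer to $X$, I would instead fix $y\in C_p(X)$ and look at the section $G_n^{y}=\{x\in X:(x,y)\in G_n\}$, which is functionally open in $X$; then $\bigcap_n G_n^{y}=\{x:y(x)=0\}$. Choosing $y$ so that $\{x:y(x)=0\}=\{x_0\}$ would finish the proof, and such a $y$ exists precisely when $\{x_0\}$ is already a functional zero-set — which is not automatic. So the real argument must go the other way: use the functional $G_\delta$ representation of $E$ together with complete regularity to build, for the fixed $x_0$, a countable family of functionally open subsets of $X$ whose intersection is $\{x_0\}$, by letting the $C_p(X)$-coordinate range over a countable subset of functions that separate points from $x_0$.

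The step I expect to be the main obstacle is producing that countable separating family: a priori, separating all $x\ne x_0$ from $x_0$ in a completely regular space requires uncountably many continuous functions, so one cannot simply list them. The resolution should exploit that $E$ is a \emph{single} functionally $G_\delta$-set in the product, i.e.\ the same sequence $(G_n)$ works simultaneously for all $y$. Fixing $x_0$, for each $n$ and each $y\in C_p(X)$ with $y(x_0)=0$ we have $(x_0,y)\in G_n$, so by functional openness and the product topology there is a functionally open $U_{n,y}\ni x_0$ in $X$ and a neighborhood of $y$ with $U_{n,y}\times\{\cdot\}\subseteq G_n$; the point is that for $x\ne x_0$ there must exist $n$ and a suitable $y$ with $(x,y)\notin G_n$, i.e.\ $y(x)\ne 0=y(x_0)$, which is exactly complete regularity. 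Assembling these into countably many functionally open sets around $x_0$ whose intersection is $\{x_0\}$ — using that the "bad" index $n$ can be taken from the fixed countable list while the function $y$ is absorbed — is the crux, and I would carry it out by a diagonal/compactness-free bookkeeping argument on the index $n$.
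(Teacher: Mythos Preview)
Your diagnosis of the difficulty is accurate, but the proposal stops exactly where the real content begins. You end with ``assembling these into countably many functionally open sets around $x_0$ \dots\ is the crux, and I would carry it out by a diagonal/compactness-free bookkeeping argument on the index $n$'' --- yet no such bookkeeping is indicated, and in fact none is available from the data you have set up. For each $x\ne x_0$ you get \emph{some} pair $(n,y)$ with $(x,y)\notin G_n$, but the witness $y$ depends on $x$, and there is nothing in the bare $G_\delta$-representation $E=\bigcap_n G_n$ that lets you ``absorb'' uncountably many such $y$'s into countably many open sets of $X$ indexed only by $n$. The sets $\{x:(x,y)\in G_n\text{ for all }y\text{ with }y(x_0)=0\}$ need not be open, and choosing one $y_n$ per $n$ gives you only $\bigcap_n\{x:y_n(x)=0\}$, which has no reason to equal $\{x_0\}$.

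The missing idea, and the one the paper uses, is that $C_p(X)$ has the \emph{countable chain condition}. Working directly with a sequence $(f_n)$ of continuous functions on $X\times C_p(X)$ converging pointwise to the evaluation map, one fixes $x_0$ and, for each $n$, uses continuity of $f_n$ together with ccc of $C_p(X)$ to produce countable families $(U_{nm})_m$ of neighborhoods of $x_0$ and $(V_{nm})_m$ of open sets in $C_p(X)$ with $\bigcup_m V_{nm}$ dense in $C_p(X)$ and ${\rm osc}(f_n)<1/n$ on each $U_{nm}\times V_{nm}$. Density then gives $|f_n(x,y)-f_n(x_0,y)|\le 1/n$ for every $x\in\bigcap_m U_{nm}$ and \emph{every} $y\in C_p(X)$; passing to the limit, $y(x)=y(x_0)$ for all $y$ whenever $x\in\bigcap_{n,m}U_{nm}$, and complete regularity finishes. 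The ccc is precisely what converts the a priori uncountable family of separating functions into a countable collection of neighborhoods of $x_0$ --- this is the step your outline promises but does not deliver.
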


\begin{proof} Let $(f_n)^{\infty}_{n=1}$ be a sequence of continuous functions $f_n:X\times Y\to \mathbb R$, where $Y=C_p(X)$, which converges to the separately continuous calculation function $f=c_X$ pointwise on $X$ and $x_0\in X$. Taking into account the continuity of $f_n$ and the countable chin conditions of $Y$, for every $n\in\mathbb N$ we construct sequences $(U_{nm})_{m=1}^{\infty}$ of open neighborhood of $x_0\in X$ and $(V_{nm})_{m=1}^{\infty}$ of open sets in $Y$ such that the set $\bigcup\limits_{m=1}^{\infty} V_{nm}$ is dense in $Y$ and $|f_n(x',y')-f_n(x'',y'')| < \frac{1}{n}$ for every $m\in \mathbb N$, $x',x''\in U_{nm}$ and $y',y''\in V_{nm}$. Then $|f_n(x,y)-f_n(x_0,y)| \leq \frac{1}{n}$ for every $x\in \bigcap\limits_{m=1}^{\infty} U_{nm}$ and $y\in Y$. Thus, $f(x,y)=f(x_0,y)$ for every $x\in
\bigcap\limits_{n=1}^{\infty} \bigcap\limits_{m=1}^{\infty}U_{nm}$ and $y\in Y$. Since $X$ is a completely regular space, $\{x_0\} =\bigcap\limits_{n=1}^{\infty} \bigcap\limits_{m=1}^{\infty}U_{nm}$.
\end{proof}

\section{Quarter-stratifiable and $PP$-space}

In this section we show that metrically quarter-stratifiable spaces coincide with Hausdorff $PP$-spaces.

The notion of quarter-stratifiable space has the following equivalent reformulation (see [7, Theorem 1.4]) which is approximate to the definition of $PP$-space. A topological space $X$ is a quarter-stratifiable space if and only if there exist a sequence $({\mathcal U}_n)_{n=1}^{\infty}$ of open covers ${\mathcal U}_n=(U_{n,i}:i\in I_n)$ of $X$ and sequence $(\alpha_n)_{n=1}^{\infty}$ of families $\alpha_n=(x_{n,i}:i\in I_n)$ of points $x_{n,i}\in X$ such that for every $x\in X$ and neighborhood $U$ of $x$ in $X$ there exists $n_0\in\mathbb N$ such that $x_{n,i}\in U$ if $n\geq n_0$ and $x\in U_{n,i}$.

\begin{remark}\label{r:3.1} For metrically quarter-stratifiable space $X$ using the paracompactness of $(X,{\mathcal T})$ we cam choose ${\mathcal T}$-locally finite ${\mathcal T}$-open covers ${\mathcal V}_n=(V_{n,j}:j\in J_n)$ of $X$, which inscribed in the cover ${\mathcal U}_n$. Now putting $y_{n,j}=x_{n,i}$, where $i\in
I_n$ is an index such that $V_{n,j}\subseteq U_{n,i}$, and taking into account that all sets $V_{n,j}$ are functionally open in $X$, we obtain that every metrically quarter stratifiable space is a $PP$-space. Moreover, according to [14, Lemma 5.1.8], analogical reasoning show that the condition of local finiteness of partitions of the unit in the definition of $PP$-space is not essential.
\end{remark}

The following proposition show that every Hausdorff $PP$-space is metrically quarter-stratifiable.

\begin{proposition}\label{p:3.2} Let $X$ be a topological space and $(h_i:i\in I)$ be a partition of the unit on $X$. Then the function $p:X^2\to\mathbb R$, $p(x,y)=\sum\limits_{i\in I}|h_i(x)-h_i(y)|$, is a continuous pseudo-metric on $X$.
\end{proposition}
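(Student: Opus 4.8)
The plan is to verify the three defining properties of a pseudo-metric — nonnegativity and symmetry, the triangle inequality, and continuity of $p$ as a function on $X^2$ — the only genuinely delicate point being that the sum $\sum_{i\in I}|h_i(x)-h_i(y)|$ is over a possibly uncountable index set, so one must first see that it is well-defined and then that it behaves continuously.

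First I would observe that for each fixed $x\in X$ at most countably many of the values $h_i(x)$ are nonzero (since they are nonnegative and sum to $1$), and more usefully, that for any $x$ there is a neighbourhood on which only ``few'' of the $h_i$ matter. Concretely, fix $x\in X$ and $\ven>0$; choose a finite set $F\subseteq I$ with $\sum_{i\in F}h_i(x)>1-\ven$. By continuity of the finitely many functions $h_i$, $i\in F$, there is a neighbourhood $W$ of $x$ with $\sum_{i\in F}h_i(z)>1-\ven$ for all $z\in W$, hence $\sum_{i\notin F}h_i(z)<\ven$ for all $z\in W$. This is the key estimate: on $W$ the ``tail'' of the partition of unity outside $F$ is uniformly small.

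Next, nonnegativity and symmetry are immediate from $|h_i(x)-h_i(y)|=|h_i(y)-h_i(x)|\ge 0$, and the triangle inequality $p(x,z)\le p(x,y)+p(y,z)$ follows termwise from $|h_i(x)-h_i(z)|\le|h_i(x)-h_i(y)|+|h_i(y)-h_i(z)|$ and the fact that a sum of termwise inequalities among nonnegative (possibly infinite) series is preserved. It remains to prove continuity of $p$ at an arbitrary point $(x_0,y_0)\in X^2$. Given $\ven>0$, apply the key estimate at $x_0$ and at $y_0$ to get a finite $F\subseteq I$ and neighbourhoods $W_1\ni x_0$, $W_2\ni y_0$ with $\sum_{i\notin F}h_i(z)<\ven$ for $z\in W_1\cup W_2$ (take the union of the two finite sets and intersect neighbourhoods accordingly). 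Then for $x\in W_1$, $y\in W_2$,
\[
\abs{p(x,y)-p(x_0,y_0)}\le \sum_{i\in F}\bigl(\abs{h_i(x)-h_i(x_0)}+\abs{h_i(y)-h_i(y_0)}\bigr)+4\ven,
\]
using the tail bound to absorb all indices outside $F$ (each tail contributes at most $\ven$, and there are at most four such tails). Since $F$ is finite and each $h_i$ is continuous, shrinking $W_1,W_2$ further makes the first sum $<\ven$, giving $\abs{p(x,y)-p(x_0,y_0)}<5\ven$ on a neighbourhood of $(x_0,y_0)$. Hence $p$ is continuous.

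The main obstacle is purely the uncountability of $I$: one cannot simply invoke uniform convergence of a series of continuous functions, so the argument must be localized via the finite set $F$ that captures almost all of the mass of the partition of unity near the two base points. Once that localization is in place, everything reduces to the finite case and the standard estimates.
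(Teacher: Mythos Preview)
Your proof is correct and uses the same core idea as the paper: pick a finite $F\subseteq I$ carrying almost all the mass near the base point(s), control the tail by $\sum_{i\notin F}h_i<\ven$, and handle the finitely many $h_i$ with $i\in F$ by ordinary continuity. The only difference is that the paper, having already checked the pseudo-metric axioms, reduces continuity of $p$ on $X^2$ to the one-variable statement that each ball $B_\ven(x_0)=\{x:p(x_0,x)<\ven\}$ is a neighborhood of $x_0$ (since a pseudo-metric is automatically continuous in its own topology once the identity $X\to(X,p)$ is continuous), which slightly trims the bookkeeping compared to your direct two-variable estimate.
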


\begin{proof} Since $h_i(x)\geq 0$ for every $i\in I$ and $x\in X$ and $\sum\limits_{i\in I}h_i(x)=1$ for every $x\in X$, the function $p$ is correctly defined, moreover, $p(x,y)\leq 2$ for every $x,y\in X$.

It easy to see that $p$ satisfies the axioms od pseudo-metric. It remains to verify that $p$ is continuous. It enough to show that for every $x_0\in X$ and $\varepsilon>0$ the set $B_\varepsilon(x_0)=\{x\in X: p(x_0,x)<\varepsilon\}$ is a neighborhood of $x_0$ in $X$. We choose a finite set $I_0\subseteq I$ such that $\sum\limits_{i\in I_0}h_i(x_0)>1-\frac{\varepsilon}{4}$. Using the continuity of $h_i$ we found a neighborhood $U$ of $x_0$ in $X$ such that $\sum\limits_{i\in
I_0}|h_i(x_0)-h_i(x)|<\frac{\varepsilon}{4}$ for every $x\in U$. Then $\sum\limits_{i\in I_0}h_i(x)>1-\frac{\varepsilon}{2}$ and
$$
p(x_0,x)\leq \sum\limits_{i\in I_0}|h_i(x_0)-h_i(x)| +
\sum\limits_{i\in I\setminus I_0}h_i(x_0) + \sum\limits_{i\in
I\setminus I_0}h_i(x) < \frac{\varepsilon}{4} +
\frac{\varepsilon}{4} + \frac{\varepsilon}{2} = \varepsilon
$$
for every $x\in U$, that is $U\subseteq B_\varepsilon(x_0)$.
\end{proof}

\begin{proposition}\label{p:3.3} Let $X$ be a topological space and $((h_{n,i}:i\in I_n))_{n=1}^{\infty}$ be a sequence of partition of the unit on $X$. Then there exists a continuous pseudo-metric $p$ on $X$ such that all function $h_{n,i}$ are continuous with respect to $p$.
\end{proposition}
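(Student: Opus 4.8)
The plan is to build $p$ as a weighted sum of the pseudo-metrics supplied by Proposition~\ref{p:3.2}. First, for each $n\in\mathbb N$ apply Proposition~\ref{p:3.2} to the partition of the unit $(h_{n,i}:i\in I_n)$, obtaining a continuous pseudo-metric
$$
p_n(x,y)=\sum_{i\in I_n}|h_{n,i}(x)-h_{n,i}(y)|
$$
on $X$, and recall from that proposition that $p_n(x,y)\le 2$ for all $x,y\in X$. Now set
$$
p(x,y)=\sum_{n=1}^{\infty}\frac{1}{2^{n}}\,p_n(x,y).
$$

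Next I would check that $p$ is a continuous pseudo-metric. Since $0\le \frac{1}{2^{n}}p_n(x,y)\le \frac{1}{2^{n-1}}$, the series converges uniformly on $X\times X$, so $p$ is well defined, and being a uniform limit of the continuous functions $(x,y)\mapsto\sum_{n=1}^{N}2^{-n}p_n(x,y)$ it is continuous on $X\times X$. The pseudo-metric axioms — non-negativity, symmetry, the triangle inequality, and vanishing on the diagonal — are all preserved under countable sums with positive coefficients, so $p$ is a pseudo-metric on $X$.

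Finally, fix $n\in\mathbb N$ and $i\in I_n$. For all $x,y\in X$ one has
$$
|h_{n,i}(x)-h_{n,i}(y)|\le p_n(x,y)= 2^{n}\cdot\frac{1}{2^{n}}p_n(x,y)\le 2^{n}\,p(x,y),
$$
the last inequality because every term of the defining series for $p$ is non-negative. Hence $h_{n,i}$ is Lipschitz (with constant $2^{n}$) with respect to $p$, and in particular continuous with respect to $p$. Since every step reduces to an elementary estimate, the only point that I expect to require slight care is the verification that the weighted sum $p$ remains continuous in the original topology of $X$; this is exactly where the uniform bound $p_n\le 2$ from Proposition~\ref{p:3.2} is used, as it forces the series defining $p$ to converge uniformly.
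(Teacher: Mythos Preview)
Your proof is correct and follows essentially the same route as the paper: define $p$ as the weighted sum $\sum_n 2^{-n}p_n$ with $p_n$ given by Proposition~\ref{p:3.2}, and then use the inequality $|h_{n,i}(x)-h_{n,i}(y)|\le 2^{n}p(x,y)$ to conclude $p$-continuity of each $h_{n,i}$. Your justification of the continuity of $p$ via uniform convergence is in fact more explicit than the paper's, which simply appeals to Proposition~\ref{p:3.2}.
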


\begin{proof} For every $x,y\in X$ we put
$$
p(x,y)= \sum\limits_{n\in\mathbb N}\frac{1}{2^n}\sum\limits_{i\in
I_n} |h_{n,i}(x) - h_{n,i}(y)|.
$$
It follows from Proposition \ref{p:3.2} that $p$ is a continuous pseudo-metric on $X$. Moreover, $|h_{n,i}(x) - h_{n,i}(y)|\leq 2^n p(x,y)$ for every $n\in\mathbb N$, $i\in I_n$ and $x,y\in X$. Therefore all function $h_{n,i}$ are $p$-continuous.
\end{proof}

\begin{corollary}\label{c:3.4} Every Hausdorff $PP$-space is a metrically quarter-stratifiable.\end{corollary}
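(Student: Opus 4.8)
The plan is to build the weaker metrizable topology directly from the partitions of the unit that come with the $PP$-structure, using Proposition~\ref{p:3.3}. So let $X$ be a Hausdorff $PP$-space, equipped with a sequence $((h_{n,i}:i\in I_n))_{n=1}^{\infty}$ of partitions of the unit and families $\alpha_n=(x_{n,i}:i\in I_n)$ of points witnessing the defining property. By Proposition~\ref{p:3.3} there is a continuous pseudo-metric $p$ on $X$ with respect to which every $h_{n,i}$ is continuous; let ${\mathcal T}_p$ be the topology generated by $p$. Since $p$ is continuous on $X\times X$, each ball $B_\varepsilon(x_0)$ is open in the original topology ${\mathcal T}$, so ${\mathcal T}_p\subseteq{\mathcal T}$; that is, ${\mathcal T}_p$ is weaker than ${\mathcal T}$.

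The only genuinely non-routine point, and the one I expect to be the main obstacle, is to check that $p$ is in fact a metric, so that ${\mathcal T}_p$ is metrizable; this is exactly where the hypotheses ``Hausdorff'' and ``$PP$'' enter. Suppose $x\ne y$ but $p(x,y)=0$. By the formula in Proposition~\ref{p:3.3} this forces $h_{n,i}(x)=h_{n,i}(y)$ for all $n$ and all $i\in I_n$; in particular $x\in{\rm supp}\,h_{n,i}$ exactly when $y\in{\rm supp}\,h_{n,i}$. Choose disjoint open neighbourhoods $U\ni x$ and $V\ni y$, possible since $X$ is Hausdorff, and apply the $PP$-property to the pairs $(x,U)$ and $(y,V)$ to obtain $n_0$ and $n_1$ respectively. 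For $n=\max\{n_0,n_1\}$, using $\sum_{i\in I_n}h_{n,i}(x)=1$ pick $i\in I_n$ with $h_{n,i}(x)>0$; then $x\in{\rm supp}\,h_{n,i}$ yields $x_{n,i}\in U$, while $h_{n,i}(y)=h_{n,i}(x)>0$ yields $y\in{\rm supp}\,h_{n,i}$ and hence $x_{n,i}\in V$, contradicting $U\cap V=\emptyset$. So $p(x,y)>0$ whenever $x\ne y$.

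It then remains to produce the quarter-stratifiable structure consisting of ${\mathcal T}_p$-open covers. For each $n$ set $U_{n,i}={\rm supp}\,h_{n,i}=\{x\in X:h_{n,i}(x)\ne 0\}$; this set is ${\mathcal T}_p$-open because $h_{n,i}$ is $p$-continuous, and ${\mathcal U}_n=(U_{n,i}:i\in I_n)$ covers $X$ because the $h_{n,i}$ form a partition of the unit. The defining property of a $PP$-space says precisely that for every $x\in X$ and every neighbourhood $U$ of $x$ there is $n_0$ with $x_{n,i}\in U$ whenever $n\ge n_0$ and $x\in U_{n,i}$. Hence, by the reformulation of quarter-stratifiability stated before Remark~\ref{r:3.1} (applied with the covers ${\mathcal U}_n$ and the families $\alpha_n$), the space $X$ is quarter-stratifiable; and since all ${\mathcal U}_n$ consist of ${\mathcal T}_p$-open sets and ${\mathcal T}_p$ is a weaker metrizable topology on $X$, this is exactly the assertion that $X$ is metrically quarter-stratifiable, as used in Remark~\ref{r:3.1}.
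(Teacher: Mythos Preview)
Your proof is correct and is exactly the argument the paper has in mind: the corollary is stated without proof, but the sentence preceding Proposition~\ref{p:3.2} announces that Propositions~\ref{p:3.2} and~\ref{p:3.3} are there precisely to establish it, and you have filled in the details in the intended way. In particular, your verification that the pseudo-metric $p$ from Proposition~\ref{p:3.3} is actually a metric---the one place where both the Hausdorff and the $PP$ hypotheses are used---is the key step the paper leaves implicit.
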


\section{Lindel$\rm\ddot{o}$f space with $B$-property}

In this section we study properties of Lindel$\rm\ddot{o}$f subsets of spaces with $B$-property. The notion of dependence on some coordinates of functions is a main technical tool in this investigation.

Let  $X\subseteq {\mathbb R}^S$, $Y$ be a set and $f:X\times Y\to \mathbb R$. We say that {\it $f$ concentrated on a set $T\subseteq S$ with respect to the first variable}, if $f(x',y)=f(x'',y)$ for every $x', x''\in X$ with $x'|_T = x''|_T$ and $y\in Y$; and {\it $f$ depends on $\aleph$ coordinates with respect to the first variable}, where $\aleph$ is an infinite cardinal, if there exists a set $T\subseteq S$ such that $f$ concentrated on $T$ and $|T|\leq\aleph$. The notions of dependence with respect to the second variable or dependence of mapping $f:X\to\mathbb R$ can be introduced analogously.

Clearly that for a Lindel\"{o}f space $X\subseteq {\mathbb R}^S$ every continuous function $f:X\to \mathbb R$ depends on at most countable quantity of coordinates.

The next result take an important place in the study of Lindel\"{o}f subsets od spaces with $B$-property.

\begin{theorem}\label{th:4.1} Let $X\subseteq \mathbb R^S$ be a Lindel\"{o}f space, $Z$ be a completely regular space, $Y=C_p(Z)$, $B\subseteq Y$ be a dense in $Y$ set and $f:X\times Y\to\mathbb R$ be a function which is continuous with respect to the second variable and jointly continuous at every point of set $X\times B$. Then $f$ depends on countable quantity of coordinates with respect to the first variable.
\end{theorem}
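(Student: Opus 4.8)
The plan is to produce a single countable set $T\subseteq S$ on which $f$ is concentrated, i.e.\ such that $f(x',y)=f(x'',y)$ for all $y\in Y$ whenever $x',x''\in X$ satisfy $x'|_T=x''|_T$. Two remarks guide the construction. First, since $f$ is continuous with respect to the second variable, for fixed $x',x''$ the set $\{y\in Y:f(x',y)=f(x'',y)\}$ is closed in $Y$; as $B$ is dense in $Y$, it is enough to guarantee $f(x',y)=f(x'',y)$ for $y$ ranging over some dense subset of $Y$ (which may depend on $x',x''$). Secondly, joint continuity of $f$ on $X\times B$ makes each function $f(\cdot,b):X\to\mathbb R$ ($b\in B$) continuous on $X$, which will help control the coordinates entering the construction.

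Next I would fix the local oscillation data: for every $(x,b)\in X\times B$ and $n\in\mathbb N$, joint continuity at $(x,b)$ yields a finite $F_n(x,b)\subseteq S$, a number $\varepsilon_n(x,b)>0$ and an open neighbourhood $V_n(x,b)\ni b$ in $Y$ such that $|f(z,y)-f(z',y')|<\frac1n$ whenever $y,y'\in V_n(x,b)$ and $z,z'\in O_n(x,b)$, where
\[
O_n(x,b)=\bigl\{w\in X:\ |w(s)-x(s)|<\varepsilon_n(x,b)\ \text{for all}\ s\in F_n(x,b)\bigr\};
\]
in particular $x\in O_n(x,b)$ and $b\in V_n(x,b)$. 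The target of the construction is, for each $n$, a countable family $\Psi_n\subseteq X\times B$ with the property that for every $x\in X$ the set $\bigcup\{V_n(a,c):(a,c)\in\Psi_n,\ x\in O_n(a,c)\}$ is dense in $Y$. Granting this, put $T=\bigcup_n\bigcup\{F_n(a,c):(a,c)\in\Psi_n\}$, a countable set. If $x'|_T=x''|_T$, then for any $y_0\in Y$, any open $W\ni y_0$ and any $n$ there is $(a,c)\in\Psi_n$ with $x'\in O_n(a,c)$ and $V_n(a,c)\cap W\neq\emptyset$; since $F_n(a,c)\subseteq T$, the inequalities defining $O_n(a,c)$ hold for $x''$ too, so $x''\in O_n(a,c)$, and choosing $y^*\in V_n(a,c)\cap W$ gives $|f(x',y^*)-f(x'',y^*)|<\frac1n$. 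Hence the continuous function $y\mapsto f(x',y)-f(x'',y)$ has modulus $<\frac1n$ on a dense subset of $Y$, so $\le\frac1n$ on all of $Y$; letting $n\to\infty$ gives $f(x',y_0)=f(x'',y_0)$, and $f$ is concentrated on the countable set $T$, as required.

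The main obstacle is the construction of the families $\Psi_n$, where the hypotheses on $X$ and $Y$ enter and two thinnings must be reconciled. For fixed $n$ and fixed first coordinate $a$, the neighbourhoods $V_n(a,c)$, $c\in B$, have union containing the dense set $B$ and hence dense in $Y$; since $Y=C_p(Z)$ has the countable chain condition, a family of open sets in $Y$ admits a countable subfamily with dense union, so only countably many $V_n(a,c)$ need be retained for a given $a$. On the other hand, for fixed $n$ the sets $O_n(a,c)$ form an open cover of the Lindel\"of space $X$, so a countable subcover may be extracted. These reductions interfere — discarding boxes to get a countable subcover of $X$ can destroy density in $Y$ of the retained neighbourhoods, and conversely — so they must be interleaved in an $\omega$-step recursion: at each step one has a countable approximation to $\Psi_n$, forms the countable set of coordinates used so far, restricts $X$ to it (obtaining a separable metrizable space with a countable base, whose product with $Y$ is again ccc), and enlarges the approximation so as to repair whichever of ``covers $X$'' or ``has dense union in $Y$'' was spoiled at the previous stage; $\Psi_n$ is then the union over all stages. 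I expect the delicate point to be verifying that the family obtained in the limit has the displayed density property for \emph{every} $x\in X$, not only for the countably many points entering the construction — this should follow because the bases of the successive restrictions of $X$ become arbitrarily fine, so every $x$ lands, at a sufficiently late stage, inside a basic set that has already been handled.
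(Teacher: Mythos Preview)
Your reduction is sound: if countable families $\Psi_n$ with the stated fibrewise density property exist, then the set $T=\bigcup_n\bigcup_{(a,c)\in\Psi_n}F_n(a,c)$ works, by exactly the argument you give. The gap is precisely where you locate it yourself --- the construction of $\Psi_n$ is not carried out. Your sketch (``alternate Lindel\"of subcovers of $X$ with ccc thinnings in $Y$ in an $\omega$-recursion, using that the restricted $X_k\times Y$ is ccc'') does not obviously close up: the boxes $O_n(a,c)$ you would add at stage $k$ use coordinates $F_n(a,c)$ that are \emph{not yet} in $T^{(k)}$, so membership $x\in O_n(a,c)$ is not determined by the stage-$k$ projection of $x$, and the ccc of $X_k\times Y$ cannot be applied to sets that are not open there. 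Your hope that ``every $x$ lands, at a sufficiently late stage, inside a basic set that has already been handled'' is exactly the point at issue, and nothing in the sketch forces it. A further warning sign is that your argument uses only that $Y$ is ccc, whereas the hypothesis is the stronger $Y=C_p(Z)$ with $Z$ completely regular; the paper's proof genuinely uses that extra structure.

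The paper's proof takes an entirely different route: it argues by contradiction and never builds $T$ directly. For each $b\in B$ one fixes (via Lindel\"of) a countable cover $(U(b,\varepsilon,n))_n$ of $X$ by basic boxes together with neighbourhoods $V(b,\varepsilon,n)\ni b$ on which $f$ oscillates by $<\varepsilon$, and records the countable coordinate set $T(b)\subseteq S$ used. Assuming no countable $T$ suffices, a transfinite recursion of length $\omega_1$ produces increasing countable $T_\alpha$ with $T(y_\beta)\subseteq T_{\alpha}$ for $\beta<\alpha$, together with $x'_\alpha,x''_\alpha\in X$ agreeing on $T_\alpha$, $y_\alpha\in B$, $\varepsilon_\alpha>0$, and neighbourhoods $V_\alpha\ni y_\alpha$ on which $|f(x'_\alpha,\cdot)-f(x''_\alpha,\cdot)|>\varepsilon_\alpha$. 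Each $V_\alpha$ contains a standard sub-basic $C_p(Z)$-neighbourhood $\tilde V_\alpha$ determined by a \emph{finite} $C_\alpha\subseteq Z$ and radius $\delta_\alpha$. Shanin's $\Delta$-system lemma applied to the $C_\alpha$'s, together with pigeonhole on $\varepsilon_\alpha,\delta_\alpha$ and on the restrictions $y_\alpha|_C$ (where $C$ is the root), yields $\gamma_0$ and an uncountable $\Gamma_0$ such that every neighbourhood of $y_{\gamma_0}$ misses only finitely many $\tilde V_\gamma$ ($\gamma\in\Gamma_0$); here complete regularity of $Z$ is used to analyse when two basic $C_p(Z)$-sets are disjoint. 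Picking $\beta\in\Gamma_0$ with $\beta>\gamma_0$ and $\tilde V_\beta\cap V(y_{\gamma_0},\varepsilon,n_0)\ne\emptyset$, the inclusion $T(y_{\gamma_0})\subseteq T_\beta$ forces $x'_\beta,x''_\beta$ into the same box $U(y_{\gamma_0},\varepsilon,n_0)$, contradicting the oscillation bound on $\tilde V_\beta$. Thus the passage from ``a countable $T(b)$ for each $b$'' to ``one countable $T$'' is achieved not by an $\omega$-closure argument but by an $\omega_1$-length combinatorial argument exploiting the finite-support structure of $C_p(Z)$.
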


\begin{proof} Suppose the contrary, that is for every at most countable set $T\subseteq S$ there exist $x',x''\in X$ and $y\in Y$ such that $x'|_T=x''|_T$ and $f(x',y)\ne(x'',y)$. Note that since $f$ is continuous with respect to the second variable and $B$ is dense in $Y$, without loss of generality we can propose that $y\in B$.

For every $y\in B$ and $\varepsilon >0$ using the joint continuity of $f$ at every point of set $X\times\{y\}$ and the Lindel\"{o}f property of $X$ we find a cover $(U(y,\varepsilon,n): n\in\mathbb N)$ of $X$ by open basic sets $U(y,\varepsilon,n)$ and a sequence $(V(y,\varepsilon,n): n\in\mathbb N)$ of neighborhoods of $y$ in $Y$ such that $|f(x',y') - f(x'',y'')|<\varepsilon$ for every $n\in\mathbb N$ ³ $(x',y'),(x'',y'')\in U(y,\varepsilon,n)\times V(y,\varepsilon,n)$. Using the structure of basic sets in the space $X\subseteq {\mathbb R}^S$ we choose a countable set $T(y,\varepsilon)\subseteq S$ such that for every $n\in\mathbb N$ and $x',x''\in X$ the conditions $x'|_{T(y,\varepsilon)} = x''|_{T(y,\varepsilon)}$ and $x'\in U(y,\varepsilon,n)$ imply $x''\in U(y,\varepsilon,n)$. Put $T(y)=\bigcup\limits_{m=1}^{\infty} T(y,\frac{1}{m})$. Clearly that $T(y)$ is countable, in particular, the function $f_y:X\to\mathbb R$, $f_y(x)=f(x,y)$, concentrated on $T(y)$.

Let $\omega_1$ is the first uncountable ordinal. Using the transfinite induction we construct an increasing sequence $(T_{\alpha}:\alpha<\omega_1)$ of countable sets $T_{\alpha}\subseteq S$, a sequence $(\varepsilon_{\alpha}:\alpha<\omega_1)$ of reals $\varepsilon_{\alpha}>0$, sequences $(x'_{\alpha}:\alpha<\omega_1)$, $(x''_{\alpha}:\alpha<\omega_1)$ and $(y_{\alpha}:\alpha<\omega_1)$ of $x'_{\alpha}, x''_{\alpha}\in X$ and $y_{\alpha}\in Y$ and a sequence $(V_{\alpha}:\alpha<\omega_1)$ of neighborhoods $V_{\alpha}$ of $y_{\alpha}$ in $Y$ such that for every $\alpha<\omega_1$ the following conditions are satisfied:

$(a)$\,\,\,$x'_{\alpha}|_{T_{\alpha}} = x''_{\alpha}|_{T_{\alpha}}$;

$(b)$\,\,\,$T(y_{\alpha})\subseteq T_{\alpha+1}$;

$(c)$\,\,\,$|f(x'_{\alpha},y) - f(x''_{\alpha},y)|> \varepsilon_{\alpha}$ for every $y\in V_{\alpha}$.

Let $T_1\subseteq S$ is a countable set. Using the assumption we choose $x'_1, x''_1 \in X$ and $y_1\in B$ such that $f(x'_1, y_1)\ne f(x''_1, y_1)$. Put $\varepsilon_1 =\frac{1}{2} |f(x'_1, y_1) -  f(x''_1, y_1)|$. Using the continuity of $f$ with respect to the second variable we choose a neighborhood $V_1$ of $y_1$ in $Y$ such that the condition $(c)$ holds for $\alpha =1$.

Suppose that the sequences $(T_{\alpha}:\alpha<\beta)$, $(\varepsilon_{\alpha}:\alpha<\beta)$, $(x'_{\alpha}:\alpha<\beta)$, $(x''_{\alpha}:\alpha<\beta)$,
$(y_{\alpha}:\alpha<\beta)$ and $(V_{\alpha}:\alpha<\beta)$, where $\beta<\omega_1$, satisfy the conditions $(a)$, $(b)$ and $(c)$. If $\beta$ is a limited ordinal, then we put $T_{\beta}=\bigcup\limits_{\alpha<\beta} T_{\alpha}$. If $\beta=\gamma +1$ for some at most countable ordinal $\gamma$, then we put $T_{\beta}= T_{\gamma}\cup T(y_{\gamma})$. Further using the assumption analogously as for $\alpha = 1$ we find $x'_{\beta}, x''_{\beta} \in X$, $y_{\beta}\in B$, $\varepsilon_{\beta}> 0$ and a neighborhood $V_{\beta}$ of $y_{\beta}$ in $Y$ such that the conditions $(a)$ and $(c)$ hold.

For every $\alpha<\omega_1$ we choose a finite set $C_{\alpha}\subseteq Z$ and $\delta_{\alpha}>0$ such that $\tilde{V}_{\alpha} = \{y\in Y: |y(z)-y_{\alpha}(z)| <
\delta_{\alpha}$ for every $z\in C_{\alpha}\} \subseteq V_{\alpha}$. Since $\aleph_1 = |\omega_1|$ is a regular cardinal, using Shanin's Lemma [14, p.185] we obtain that there exist $\varepsilon_0 >0$, $\delta_0 >0$, a finite set $C\subseteq Z$ and a set $\Gamma \subseteq [1, \omega_1)$ such that $|\Gamma|=\aleph_1$, $\varepsilon_{\gamma} \geq \varepsilon_0$ and $\delta_{\gamma}\geq \delta_0$ for every $\gamma \in \Gamma$ and $C_{\gamma'} \cap C_{\gamma''} = C$ for every distinct $\gamma',\gamma'' \in \Gamma$. We consider the continuous mapping $\varphi: \{y_{\gamma}:\gamma\in\Gamma\}\to \mathbb R^C$, $\varphi(y_{\gamma}) = (y_{\gamma}(z))_{z\in C}$. Since $\mathbb R^C$ is a separable metrizable space, there exists $\gamma_0 \in \Gamma$ such that $|\Gamma_0|=\aleph_1$, where $\Gamma_0=\{\gamma\in \Gamma: |y_{\gamma}(z) -
y_{\gamma_0}(z)|<\delta_0$ for every $z\in C\}$.

Show that for every neighborhood $V$ of $y_0=y_{\gamma_0}$ the set $\Gamma(V)=\{\gamma\in \Gamma_0: V\cap \tilde{V_{\gamma}}= \O\}$ is finite. Let $C'\subseteq Z$ is a finite set, $\delta'> 0$ and $V=\{y\in Y: |y(z)-y_0(z)|<\delta'$ for every $z\in C'\}$. Since $Z$ is a completely regular space, the condition $V\cap \tilde{V_{\gamma}} = \O$ for some $\gamma\in \Gamma_0$ implies the existence of $z_{\gamma}\in C'\cap C_{\gamma}$ such that $|y_{\gamma}(z) - y_0(z)| \geq \delta_{\gamma} + \delta' >
\delta_0$. It follows from the definition of the set $\Gamma_0$ that $z_{\gamma}\not\in C$. Taking into account that $C_{\gamma'}\cap C_{\gamma''} = C$ for every distinct $\gamma', \gamma'' \in \Gamma_0$ we obtain that $z_{\gamma'}\ne z_{\gamma''}$ for every distinct $\gamma', \gamma''\in \Gamma(V)$. Thus, $|\Gamma(V)| \leq |C'\setminus C| < \aleph_0$.

Choose $m_0\in \mathbb N$ such that $\frac{1}{m_0}<\varepsilon_0$. Since the set $\Gamma' = \bigcup\limits_{n\in\mathbb N} \Gamma(V(y_0,\frac{1}{m_0},n))$ is at most countable, $|\Gamma_0\setminus \Gamma'| = \aleph_0$. Therefore there exist $\beta\in\Gamma_0$ such that $\beta > \gamma_0$ and $V(y_0,\frac{1}{m_0},n)\cap \tilde{V}_{\beta} \ne \O$ for every $n\in \mathbb N$. Recall that $(U(y_0,\frac{1}{m_0}):n\in\mathbb N)$ is a cover of $X$. We take $n_0\in \mathbb N$ such that $x'_{\beta}\in U(y_0, \frac{1}{m_0} n_0))$. Since $(b)$ implies that $T(y_0,\frac{1}{m_0})\subseteq T(y_0) = T(y_{\gamma_0})\subseteq T_{\beta}$ and  $x'_{\beta}|_{T_{\beta}} = x''_{\beta}|_{T_{\beta}}$ according to $(a)$, $x''_{\beta}\in U(y_0,\frac{1}{m_0}, n_0)$. Now take $y\in
\tilde{V}_{\beta}\cap V(y_0,\frac{1}{m_0},n_0)$. According to $(c)$ we obtain
$$
|f(x'_{\beta}, y) - f(x''_{\beta}, y)| > \varepsilon_{\beta} \geq
\varepsilon_0.
$$
On other hand, according to the choice of the sets $U(y_0,\frac{1}{m_0},n_0)$ and $V(y_0,\frac{1}{m_0},n_0)$, we have
$$
|f(x'_{\beta}, y) - f(x''_{\beta}, y)| < \frac{1}{m_0} <
\varepsilon_0.
$$

Hence, we obtain a contradiction and the Theorem is proved.\end{proof}

The following result gives the positive answer to the Question 1.5.

\begin{theorem}\label{th:4.2} Let $Z$ be a completely regular space with $B$-property and $X\subseteq Z$ be a Lindel$\rm\ddot{o}$f subset of $Z$. Then there exist a separable metrizable space $H$ and a continuous bijection $\varphi:X\to H$.
\end{theorem}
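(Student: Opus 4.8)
The plan is to deduce Theorem~\ref{th:4.2} from Theorem~\ref{th:4.1} together with the characterization of $B$-property via the calculation function. First I would observe that, since $Z$ has $B$-property, by Proposition~\ref{p:2.1} the set $E=\{(z,y):y(z)=0\}$ is a functionally $G_\delta$-set in $Z\times C_p(Z)$, and more importantly the calculation function $c_Z:Z\times C_p(Z)\to\mathbb R$ is of the first Baire class; in particular it has the Namioka property, so there is a dense $G_\delta$-set $B\subseteq C_p(Z)$ such that $c_Z$ is jointly continuous at every point of $Z\times B$. (Alternatively one invokes directly that a first Baire class function on a product with a Baire second factor is jointly continuous on $X\times B$ for a dense $G_\delta$ set $B$; $C_p(Z)$ need not be Baire, so I would instead argue through the $F_\sigma$ structure of the level sets, or realize $Z$ as a subspace of a power $\mathbb R^S$ via the evaluation into $\mathbb R^{C_p(Z)}$ or into a power of $[0,1]$ using complete regularity.)

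The crucial move is to embed $Z$, and hence $X$, into a product $\mathbb R^S$. Since $Z$ is completely regular, the diagonal map $e:Z\to\mathbb R^{C(Z)}$ (or into $[0,1]^{C(Z,[0,1])}$) is a homeomorphic embedding, so I may as well assume $X\subseteq\mathbb R^S$ for some index set $S$. Now I want to apply Theorem~\ref{th:4.1} with this $X$, with $Z$ playing the role of the ``$Z$'' there, $Y=C_p(Z)$, $B$ the dense set on which $c_Z$ is jointly continuous, and $f=c_Z|_{X\times Y}$: this $f$ is continuous in the second variable (it is the evaluation) and jointly continuous at every point of $X\times B$. Theorem~\ref{th:4.1} then yields a countable set $T\subseteq S$ on which $f$ is concentrated with respect to the first variable, i.e.\ $c_Z(x',y)=c_Z(x'',y)$ whenever $x'|_T=x''|_T$. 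But $c_Z(x,y)=y(x)$ separates points of $Z$ as $y$ ranges over $C_p(Z)$ (again by complete regularity), so $x'|_T=x''|_T$ forces $x'=x''$; that is, the coordinate projection $\pi_T:X\to\mathbb R^T$ is injective.

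Finally $H:=\pi_T(X)\subseteq\mathbb R^T$ is a subspace of a separable metrizable space (as $T$ is countable), hence itself separable metrizable, and $\varphi:=\pi_T|_X:X\to H$ is a continuous bijection, which is exactly the desired conclusion.

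I expect the main obstacle to be the justification that the calculation function $c_Z$ is jointly continuous on $X\times B$ for some dense set $B\subseteq C_p(Z)$: $C_p(Z)$ is typically not a Baire space, so the usual Namioka-type argument does not apply off the shelf, and one must instead exploit the concrete $F_\sigma$ (indeed functionally $F_\sigma$) description of the sets $c_Z^{-1}((a,b))$ coming from Proposition~\ref{p:2.1} to produce the set $B$ by a Baire-category argument carried out on the Lindel\"of, hence Baire-measurable, factor $X$ rather than on $C_p(Z)$ --- or, more directly, to invoke the already-established fact that the first Baire class function $c_Z$ restricted to $X\times C_p(Z)$ is jointly continuous at the points of $X\times B$ for a residual $B$ because $X$ embeds nicely and the relevant oscillation sets are $F_\sigma$. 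The remaining steps (the embedding into $\mathbb R^S$, the point-separation argument, and the metrizability of $H$) are routine once this is in place.
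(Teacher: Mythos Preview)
Your overall architecture (embed $Z\subseteq\mathbb R^S$, use Theorem~\ref{th:4.1} to get a countable $T$, then project) is the paper's, and your endgame (injectivity of $\pi_T$ via complete regularity, metrizability of $H=\pi_T(X)$) is exactly right. The gap is precisely where you flag it: you try to apply Theorem~\ref{th:4.1} to $f=c_Z|_{X\times Y}$ directly, which forces you to produce a dense $B\subseteq C_p(Z)$ with $c_Z$ jointly continuous on $X\times B$. None of your suggested workarounds actually delivers such a $B$; the Namioka property for $c_Z$ goes the wrong way (it gives a residual set in the \emph{first} factor), $C_p(Z)$ is typically not Baire, and the $F_\sigma$ structure of $c_Z^{-1}((a,b))$ does not by itself manufacture a dense set of points of joint continuity in the $Y$-direction. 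So as written the plan stalls at the one step you already suspect.

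The paper sidesteps this obstacle entirely, and the fix is very short. Instead of feeding $c_Z$ into Theorem~\ref{th:4.1}, use the $B$-property to pick continuous $g_n:Z\times C_p(Z)\to\mathbb R$ with $g_n\to c_Z$ pointwise, set $f_n=g_n|_{X\times Y}$, and apply Theorem~\ref{th:4.1} to each $f_n$. Since $f_n$ is jointly continuous everywhere, the hypothesis of Theorem~\ref{th:4.1} is satisfied with $B=Y$ trivially, yielding countable $T_n\subseteq S$ on which $f_n$ is concentrated. Then $T=\bigcup_n T_n$ is still countable, and the pointwise limit $c_Z|_{X\times Y}$ is concentrated on $T$ as well. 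From there your final paragraph goes through verbatim. In short: apply Theorem~\ref{th:4.1} to the approximants, not to the limit.
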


\begin{proof} Without loos of generality we can assume that $Z\subseteq \mathbb R^S$. Put $Y=C_p(Z)$ and we consider the separately continuous function $g:Z\times Y\to\mathbb R$, $g(z,y)=y(z)$. Since $Z$ has $B$-property, there exists a sequence of continuous function $g_n: Z\times Y\to \mathbb R$ such that $g(z,y) = \lim\limits_{n\to\infty} g_n(z,y)$ for every $(z,y)\in Z\times Y$. Put $f_n = g_n|_{X\times Y}$. According to Theorem 4.1, for every $n\in\mathbb N$ there exists an at most countable set $T_n\subseteq S$ such that $f_n$ concentrated on $T_n$ with respect to the first variable. Then the function $f$ concentrated on the set $T =
\bigcup\limits_{n=1}^{\infty} T_n$ as the poinwise limit of the sequence $(f_n)^{\infty}_{n=1}$.

It remains to consider the continuous mapping $\varphi: X\to \mathbb R^T$, $\varphi(x)=X|_T$. Note that $\varphi$ is a bijection with valued in the separable metrizable space $H=\varphi(X)$. Really if $\varphi(x_1)=\varphi(x_2)$, then $y(x_1)=y(x_2)$ for every $y\in Y$. Therefore $x_1=x_2$, because $Z$ is completely regular.
\end{proof}

\begin{remark}\label{r:4.3} Every countable space $X$ has $B$-property, because the space $C_p(X)$ is metrizable. Therefore every nonmetrizable countable space is an example which shows that Theorem 4.2 can not be strengthened to metrizability of Lindel$\rm\ddot{o}$f subspace of spaces with $B$-property (analogously as for compact spaces).
\end{remark}

\section{Namioka spaces and Maran spaces}

We will use the following two auxiliary results from [15], which give a possibility to use the technic of the dependence on some coordinates for the obtaining the Namioka property.

\begin{proposition}\label{p:5.1} Let $Y\subseteq {\mathbb R}^T$ be a compact space, $(Z,|\cdot - \cdot|)$ be a metric space, $f:Y\to Z$ be a continuous mapping $\varepsilon \geq 0$ and set $S\subseteq T$ such that $|f(y')-f(y'')|_Z\leq\varepsilon$ for every $y',y''\in Y$ with $y'|_S=y''|_S$. Then for every $\varepsilon'>\varepsilon$ there exists a finite set $S_0\subseteq S$ and $\delta>0$ such that $|f(y')-f(y'')|_Z\leq \varepsilon'$ for every $y',y''\in Y$ with $|y'(s)-y''(s)|<\delta$ for every $s\in S_0$.
\end{proposition}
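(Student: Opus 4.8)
The plan is to run a compactness argument inside $Y\times Y$. Define $\Phi:Y\times Y\to\mathbb R$ by $\Phi(y',y'')=|f(y')-f(y'')|_Z$; it is continuous, since $f$ is continuous and the metric of $Z$ is continuous. Put
$$
K=\{(y',y'')\in Y\times Y:\Phi(y',y'')\geq\varepsilon'\}.
$$
Being closed in the compact space $Y\times Y$, the set $K$ is compact. The hypothesis on $S$ says exactly that $K$ misses the ``diagonal over $S$'' $D=\{(y',y'')\in Y\times Y:y'|_S=y''|_S\}$, since $y'|_S=y''|_S$ forces $\Phi(y',y'')\leq\varepsilon<\varepsilon'$. (If $K=\emptyset$ the conclusion is trivial with $S_0=\emptyset$ and $\delta=1$, so assume $K\neq\emptyset$.)

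Next I would cover $K$ by the open sets $W_{s,q}=\{(y',y'')\in Y\times Y:|y'(s)-y''(s)|>q\}$, where $s$ ranges over $S$ and $q$ over the positive rationals; each $W_{s,q}$ is open because the map $(y',y'')\mapsto|y'(s)-y''(s)|$ is continuous on $Y\times Y\subseteq\mathbb R^T\times\mathbb R^T$. These sets cover $K$: for $(y',y'')\in K$ we have $(y',y'')\notin D$ by the previous paragraph, hence $y'(s)\neq y''(s)$ for some $s\in S$, and then $(y',y'')\in W_{s,q}$ for any rational $q\in(0,|y'(s)-y''(s)|)$. By compactness of $K$ there are finitely many pairs $(s_1,q_1),\dots,(s_k,q_k)$ with $K\subseteq\bigcup_{j=1}^k W_{s_j,q_j}$; set $S_0=\{s_1,\dots,s_k\}$ and $\delta=\min\{q_1,\dots,q_k\}>0$.

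Finally I would verify that $S_0$ and $\delta$ work. Let $y',y''\in Y$ with $|y'(s)-y''(s)|<\delta$ for every $s\in S_0$. If we had $\Phi(y',y'')>\varepsilon'$, then $(y',y'')\in K$, hence $(y',y'')\in W_{s_j,q_j}$ for some $j$, giving $|y'(s_j)-y''(s_j)|>q_j\geq\delta$ with $s_j\in S_0$ --- a contradiction. Thus $|f(y')-f(y'')|_Z=\Phi(y',y'')\leq\varepsilon'$, as required. The only substantive point is the reduction to a finite subcover: recognizing that the ``bad set'' $K$ is compact and that, thanks to the hypothesis, it is covered by the coordinate-separation sets $W_{s,q}$; once that is in place the finite set $S_0$ and the uniform $\delta$ drop out immediately. (A net argument gives the same conclusion: otherwise one builds a net $(y'_\lambda,y''_\lambda)$ indexed by pairs consisting of a finite subset of $S$ and an $n\in\mathbb N$, with $y'_\lambda(s)-y''_\lambda(s)\to0$ for each $s\in S$ but $\Phi(y'_\lambda,y''_\lambda)>\varepsilon'$; a convergent subnet then yields a point of $D$ on which $\Phi\geq\varepsilon'$, contradicting the hypothesis.)
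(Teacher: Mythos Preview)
Your argument is correct: the set $K=\{(y',y'')\in Y\times Y:|f(y')-f(y'')|_Z\geq\varepsilon'\}$ is compact, disjoint from the $S$-diagonal by hypothesis, and is covered by the open coordinate-separation sets $W_{s,q}$; the finite subcover yields $S_0$ and $\delta$ exactly as you wrote. (A cosmetic remark: since you defined $K$ with $\geq\varepsilon'$, in the verification you could equally assume $\Phi(y',y'')\geq\varepsilon'$ and conclude the strict inequality $\Phi<\varepsilon'$; your version with $>\varepsilon'$ is of course also fine.)

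As for comparison with the paper: there is nothing to compare, because the paper does not prove this proposition. It is stated there as one of two ``auxiliary results from [15]'' (Mykhaylyuk, \emph{Namioka spaces and topological games}, Bull.\ Austral.\ Math.\ Soc.\ \textbf{73} (2006), 263--272) and simply quoted without proof. Your compactness argument is the natural one and is presumably close to what appears in [15].
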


\begin{proposition}\label{p:5.2} Let $X$ be a Baire spase, $Y\subseteq {\mathbb R}^T$ be a compact space, $f:X\times Y\to \mathbb R$ be a separately continuous function. Then the following statement are equivalent:

$(i)$ $f$ has the Namioka property;

$(ii)$ for every open in $X$nonempty set $U$ and a real $\varepsilon >0$ there exist an open in $X$ nonempty set $U_0\subseteq U$ and an at most countable set $S_0\subseteq T$ such that $|f(x,y')-f(x,y'')|\leq \varepsilon$ for every $x\in U_0$ and $y',y''\in Y$ with $y'|_{S_0}=y''|_{S_0}$.
\end{proposition}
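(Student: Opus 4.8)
The plan is to prove the two implications separately, the first being routine and the second carrying the real content.

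\emph{The implication $(i)\Rightarrow(ii)$.} Assume $f$ has the Namioka property, fix a dense $G_\delta$ set $A\subseteq X$ such that $f$ is jointly continuous at each point of $A\times Y$, and fix a nonempty open $U\subseteq X$ and $\varepsilon>0$; choose $x_0\in A\cap U$. For each $y\in Y$, joint continuity of $f$ at $(x_0,y)$ provides an open $W_y\ni x_0$ with $W_y\subseteq U$ and a basic open neighbourhood $V_y=\{y'\in Y:|y'(t)-y(t)|<\delta_y\text{ for }t\in F_y\}$ of $y$ (with $F_y\subseteq T$ finite) such that $f$ oscillates by less than $2\varepsilon/3$ on $W_y\times V_y$. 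By compactness $Y=V_{y_1}\cup\dots\cup V_{y_k}$; put $U_0=\bigcap_{j\le k}W_{y_j}$ and $S_0=\bigcup_{j\le k}F_{y_j}$, a finite set. If $x\in U_0$ and $y'|_{S_0}=y''|_{S_0}$, then $y'\in V_{y_j}$ for some $j$, and since membership in $V_{y_j}$ depends only on the coordinates in $F_{y_j}\subseteq S_0$ we also get $y''\in V_{y_j}$; hence $|f(x,y')-f(x,y'')|<2\varepsilon/3<\varepsilon$, which is $(ii)$ (in fact with $S_0$ finite).

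\emph{The implication $(ii)\Rightarrow(i)$.} Here I would work with the oscillation function $\mathrm{osc}(f,\cdot)$ on $X\times Y$, which is upper semicontinuous. Since $Y$ is compact, each set $D_\varepsilon=\{x\in X:\mathrm{osc}(f,(x,y))\ge\varepsilon\text{ for some }y\in Y\}$ is the projection of a closed subset of $X\times Y$, hence closed, so $G_\varepsilon=X\setminus D_\varepsilon$ is open; and if every $G_\varepsilon$ is dense, then $A=\bigcap_n G_{1/n}$ is a $G_\delta$ set, dense by the Baire property of $X$, at each point of whose product with $Y$ the function $f$ is jointly continuous. So it suffices to show: for every nonempty open $U\subseteq X$ and every $\varepsilon>0$ there is a nonempty open $U'\subseteq U$ with $\mathrm{osc}(f,(x,y))<\varepsilon$ for all $x\in U'$, $y\in Y$ (when $x\in U'$ we may use $U'$ itself as the $X$-neighbourhood in the definition of oscillation).

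Fix such $U$ and $\varepsilon$. Applying $(ii)$ with $\varepsilon/5$ yields a nonempty open $U_1\subseteq U$ and a countable $S_1\subseteq T$ with $|f(x,y')-f(x,y'')|\le\varepsilon/5$ whenever $x\in U_1$ and $y'|_{S_1}=y''|_{S_1}$. For each $x\in U_1$ the continuous function $f(x,\cdot):Y\to\mathbb R$ satisfies the hypothesis of Proposition~\ref{p:5.1} with the set $S_1$, so by that proposition (applied with $\varepsilon'=\varepsilon/4$) there are a finite $S\subseteq S_1$ and an $m\in\mathbb N$ such that $|f(x,y')-f(x,y'')|\le\varepsilon/4$ for all $y',y''\in Y$ with $|y'(s)-y''(s)|<1/m$ for every $s\in S$. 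For a finite $S\subseteq S_1$ and $m\in\mathbb N$ let $E_{S,m}$ consist of those $x\in U_1$ such that $|f(x,y')-f(x,y'')|\le\varepsilon/4$ for all $y',y''\in Y$ with $|y'(s)-y''(s)|<1/m$ for every $s\in S$; each $E_{S,m}$ is closed in $U_1$ (it is an intersection, over pairs $(y',y'')$, of the sets $\{x:|f(x,y')-f(x,y'')|\le\varepsilon/4\}$, which are closed by continuity of $f$ in $x$), there are only countably many pairs $(S,m)$, and for each $x\in U_1$ the finite set and integer furnished by Proposition~\ref{p:5.1} put $x$ into some $E_{S,m}$, so $U_1=\bigcup_{S,m}E_{S,m}$. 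Since $U_1$ is open in the Baire space $X$ it is Baire, hence some $E_{S_0,m_0}$ has nonempty interior $U_2$; thus on $U_2$ we have a single finite $S_0\subseteq T$ and $\delta=1/m_0>0$ with $|f(x,y')-f(x,y'')|\le\varepsilon/4$ for all $x\in U_2$ and all $y',y''\in Y$ agreeing within $\delta$ on $S_0$. Now cover the compact $Y$ by finitely many open sets $B_j=\{y\in Y:|y(s)-y^{(j)}(s)|<\delta/2\text{ for }s\in S_0\}$ (with $y^{(j)}\in Y$, $1\le j\le k$), fix any $x_0\in U_2$, and use continuity of each $f(\cdot,y^{(j)})$ at $x_0$ to pick a nonempty open $U'\subseteq U_2$ with $x_0\in U'$ and $|f(x,y^{(j)})-f(x_0,y^{(j)})|<\varepsilon/8$ for all $x\in U'$ and all $j$. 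Given $x\in U'$ and $y\in Y$, choose $j$ with $y\in B_j$; any two elements of $B_j$ differ by less than $\delta$ on $S_0$, so on the neighbourhood $U'\times B_j$ of $(x,y)$ the function $f$ oscillates by at most $2(\varepsilon/4)+2(\varepsilon/8)<\varepsilon$. Hence $\mathrm{osc}(f,(x,y))<\varepsilon$ for all $x\in U'$, $y\in Y$, as required.

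The main obstacle is the passage, in the last paragraph, from the conclusion of Proposition~\ref{p:5.1} — which for each $x$ separately controls $f(x,\cdot)$ by finitely many coordinates up to a margin — to a statement uniform over a nonempty open set of points $x$; this is precisely where the Baire property of $X$ enters, turning the countable decomposition $U_1=\bigcup_{S,m}E_{S,m}$ into relatively closed pieces into the existence of one with nonempty interior. The remaining ingredients (upper semicontinuity of the oscillation, compactness of $Y$, open subspaces of Baire spaces being Baire) are standard, and the repeated halving of $\varepsilon$ — which is why constants such as $\varepsilon/5$, $\varepsilon/4$, $\varepsilon/8$ appear — is routine bookkeeping that I would not belabour.
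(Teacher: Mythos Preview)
The paper does not actually prove Proposition~\ref{p:5.2}: it is quoted, together with Proposition~\ref{p:5.1}, as an auxiliary result from reference~[15], so there is no in-paper argument to compare against. That said, your proof is correct and self-contained.

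For $(i)\Rightarrow(ii)$ your compactness argument is the standard one, and indeed yields a \emph{finite} $S_0$, which is stronger than required. For $(ii)\Rightarrow(i)$ the reduction to showing that each $G_\varepsilon$ is open and dense is the natural framework, and the crucial step---passing from the pointwise conclusion of Proposition~\ref{p:5.1} to a uniform one on a nonempty open set---is handled correctly: the sets $E_{S,m}$ are closed in $U_1$ (as intersections of closed sets coming from continuity of $f$ in $x$), there are only countably many pairs $(S,m)$ because $S_1$ is countable, and the Baire property of $U_1$ then produces a piece with interior. The final estimate on $U'\times B_j$ checks out (the bound $\varepsilon/4+\varepsilon/8+\varepsilon/8+\varepsilon/4=3\varepsilon/4<\varepsilon$ is what one gets by routing through $y^{(j)}$ and $x_0$). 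One small cosmetic point: the initial choice of $\varepsilon/5$ is never used beyond ensuring $\varepsilon/5<\varepsilon/4$ so that Proposition~\ref{p:5.1} applies; any value strictly below $\varepsilon/4$ would do.
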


Let $X$ be a topological space.Define the Shoquet game on $X$ in which two players $\alpha$ and $\beta$ participate. A nonempty open in $X$ set $U_0$
is the first move of $\beta$ and a nonempty open in $X$ set $V_1\subseteq U_0$ is the first move of $\alpha$. Further $\beta$ chooses a nonempty open in $X$
set $U_1\subseteq V_1$ and $\alpha$ chooses a nonempty open in $X$ set $V_2\subseteq U_1$ and so on. The player $\alpha$ wins if $\bigcap\limits_{n=1}^{\infty}V_n\ne\O$. Otherwise $\beta$ wins.

A topological space $X$ is called {\it $\alpha$-favorable} if $\alpha$ has a winning strategy in this game. A topological space $X$ is called {\it $\beta$-unfavorable} if $\beta$ has no winning strategy in this game. Clearly, any $\alpha$-favorable topological space $X$ is a $\beta$-unfavorable space. It was shown in [6] that a topological game $X$ is Baire if and only if $X$ is $\beta$-unfavorable.

It is well-known (see [17]) that $\beta$-unfavorability of $X$ is equivalent to the fact that $X$ is a Baire space.

The following result occupies a central place in this section.

\begin{theorem}\label{th:5.3} Let $X$ be a Baire space, $Y\subseteq \mathbb R^T$ be a compact space and $f:X\times Y\to\mathbb R$ be a separately continuous Baire measurability function. Then $f$ has the Namioka property.
\end{theorem}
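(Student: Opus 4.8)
The plan is to verify condition $(ii)$ of Proposition~\ref{p:5.2}, which by that proposition is equivalent to the Namioka property of $f$. So fix a nonempty open set $U\subseteq X$ and a real $\varepsilon>0$; we must produce a nonempty open $U_0\subseteq U$ and an at most countable $S_0\subseteq T$ such that $|f(x,y')-f(x,y'')|\le\varepsilon$ whenever $x\in U_0$ and $y'|_{S_0}=y''|_{S_0}$. The idea is to transfer the problem, via the calculation function, into the setting of Theorem~\ref{th:4.1}. Since $f$ is Baire measurable, it is in particular a pointwise limit of a transfinite hierarchy built from continuous functions; but more usefully, $U$ (as an open subspace of a Baire space) is itself Baire, and we may work inside $U\times Y$. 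The first step is to observe that $Y\subseteq\mathbb R^T$ compact, hence $Y=C_p(Y')$-like behaviour is not directly available — instead I would use that a Baire measurable separately continuous $f$ on $U\times Y$ with $Y$ compact is, on a dense $G_\delta$ subset of $U$ obtained from the Baire category theorem applied to the sets where successive Baire-class approximants oscillate little, well-approximated so that the oscillation of $f(x,\cdot)$ is controlled.

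The key technical step is the reduction to dependence on countably many coordinates. Consider for each $n$ the continuous functions arising in the Baire representation of $f$; restricting attention to a suitable open $U_0\subseteq U$ on which these behave like a first-Baire-class function (using the Baire property of $X$ and the countable chain condition of the relevant pointwise-convergence space, exactly as in the proof of Proposition~\ref{p:2.5}), we arrange that $f|_{U_0\times Y}$ is a pointwise limit of continuous functions $g_n:U_0\times Y\to\mathbb R$. Now I would apply the machinery of Theorem~\ref{th:4.1}: each $g_n$, being continuous on $U_0\times Y$ with $Y$ compact (so trivially jointly continuous everywhere), concentrated on a countable set $S_n\subseteq T$ with respect to the second variable — this is the analogue, with the roles of the variables swapped, of the statement that continuous functions on Lindel\"{o}f subspaces of products depend on countably many coordinates (here $Y$ compact makes it immediate via Proposition~\ref{p:5.1} with $\varepsilon=0$, taking a countable union of the finite sets $S_0$ over a countable approximation). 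Setting $S_0=\bigcup_n S_n$, the pointwise limit $f|_{U_0\times Y}$ is concentrated on $S_0$ with respect to the second variable, and then $|f(x,y')-f(x,y'')|=0\le\varepsilon$ whenever $y'|_{S_0}=y''|_{S_0}$, giving $(ii)$.

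The main obstacle I anticipate is the first reduction: passing from "Baire measurable" to "first Baire class on a suitable open subset," i.e., showing that on some nonempty open $U_0\subseteq U$ the function $f$ is a pointwise limit of continuous functions. For a general Baire measurable function this localization is delicate — one uses that $X$ is a Baire space together with the fact that the relevant oscillation sets (defined through the transfinite Baire hierarchy and the compactness of $Y$, via Proposition~\ref{p:5.1}) are of the first category off their "good" part, so that Baire category produces a residual, in particular dense, set of $x$ where $f(x,\cdot)$ is suitably tame; intersecting with $U$ and taking interiors where possible yields $U_0$. One must be careful that this argument uses the separate continuity of $f$ (to get the relevant sets to be, say, $F_\sigma$ in $x$) and the compactness of $Y$ in an essential way; this is the step where the hypotheses genuinely interact, and getting the measurability bookkeeping right — especially handling limit ordinals in the Baire hierarchy uniformly — is where the real work lies. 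Once past it, the coordinate-dependence argument and the appeal to Proposition~\ref{p:5.2} are comparatively routine.
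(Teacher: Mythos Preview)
Your proposal has a genuine gap at precisely the step you call ``comparatively routine.'' You assert that a continuous function $g_n:U_0\times Y\to\mathbb R$ is concentrated on some countable set $S_n\subseteq T$ with respect to the second variable, uniformly in $x\in U_0$. This is false in general: take $U_0$ discrete with $|U_0|=|T|$, fix a bijection $\phi:U_0\to T$, let $Y=\{0,1\}^T$, and set $g(x,y)=y(\phi(x))$; then $g$ is continuous but depends on every coordinate. What Proposition~\ref{p:5.2} (applied to the continuous, hence Namioka, function $f_n$) actually gives is only that \emph{on some smaller nonempty open set} $\tilde U_n\subseteq U_0$ one has approximate concentration on a countable $S_n$. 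The open set depends on $n$, and there is no reason for $\bigcap_n\tilde U_n$ to be nonempty, let alone open.

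This is exactly why the paper's argument needs the Choquet game. The paper fixes a countable family $(f_n)$ from which $f$ is built by iterated pointwise limits, negates condition~$(ii)$ of Proposition~\ref{p:5.2} to get a ``bad'' open set $U_0$ and $\varepsilon_0>0$, and then builds a strategy for player~$\beta$: at step~$n$, shrink to $\tilde U_n$ where $f_n$ is $\frac{1}{n}$-concentrated on a countable $S_n\supseteq S_{n-1}$, then use the negated hypothesis to shrink further to $U_n$ on which $|f(x,y'_n)-f(x,y''_n)|>\varepsilon_0$ for some $y'_n,y''_n$ agreeing on $S_n$. Since $X$ is Baire, $\beta$ loses some play, producing $x_0\in\bigcap_n U_n$. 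At this single point the section $f^{x_0}$ is concentrated on $S_0=\bigcup_n S_n$ (being an iterated limit of the $f_n^{x_0}$, each concentrated on $S_0$), and Proposition~\ref{p:5.1} then yields a contradiction with the chosen $y'_n,y''_n$. The game is the device that replaces your unavailable uniform-in-$x$ concentration; your first obstacle (reducing to first Baire class on an open set) is bypassed entirely, since only concentration of the section at one point $x_0$ is needed, and that survives iterated limits.
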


\begin{proof} Since $f$ is a Baire measurability, there exists an countable family ${\mathcal F}$ of continuous functions on $X\times Y$ such that the function  $f$ can be obtained as at most countable times pointwise limit of sequence of functions from ${\mathcal F}$. Let ${\mathcal F} = (f_n: n\in\mathbb N)$.

Suppose that $f$ has not the Namioka property. Then $|T|>\aleph_0$ and according to Proposition 5.2, there exist an open in $X$ nonempty set $U_0$ and  $\varepsilon_0>0$ such that for every open in $X$ nonempty set $U\subseteq U_0$ and at most countable set $S\subseteq T$ there exist $x\in U$ and $y',y''\in Y$
such that $y'|_S=y''|_S$ ³ $|f(x,y') - f(x,y'')| >\varepsilon_0$.

We construct a strategy $\tau$ for the player $\beta$ in the Shoquet game on the space $X$. The set $U_0$ is the first move of $\beta$. Let $V_1\subseteq U_0$ is a nonempty in $X$ set which is the first move of the player $\alpha$. Since the continuous function $f_1$ has the Namioka property, according to Proposition 5.2 there exist an open in $X$ nonempty set $\tilde{U}_1\subseteq V_1$ and countable set $S_1\subseteq T$ such that 
$$
|f_1(x,y')-f_1(x,y'')| < 1
$$
for every $x\in \tilde{U}_1$ and $y',y''\in Y$ with $y'|_{S_1}=y''|_{S_1}$. Using the assumption we find points $x_1\in\tilde{U}_1$ and $y'_1,y''_1\in Y$ such that $y'_1|_{S_1}=y''_1|_{S_1}$ and
$$
|f(x_1,y'_1)-f(x_1,y''_1)| > \varepsilon_0.
$$
Since $f$ is continuous with respect to the second variable, there exists an open in $X$ neighborhood $U_1\subseteq \tilde{U}_1$ of $x_1$ such that 
$$
|f(x,y'_1)-f(x,y''_1)| > \varepsilon_0
$$
for every $x\in U_1$. Now we put $U_1=\tau(U_0,V_1)$, that is $U_1$ is the second move of $\beta$.

Further, let $V_2\subseteq U_1$ be a nonempty open in $X$ set which is the second move of $\alpha$. Choose an open in $X$ nonempty set $\tilde{U}_2\subseteq V_2$ and countable set $S_2\subseteq T$ which contains the set $S_1$ such that
$$
|f_2(x,y')-f_2(x,y'')| < \frac{1}{2}
$$
for every $x\in \tilde{U}_2$ and $y',y''\in Y$ with $y'|_{S_2}=y''|_{S_2}$. Using the assumption and the continuity of $f$ with respect to the first variable we choose an open in $X$ nonempty set $U_2\subseteq \tilde{U}_2$ and points $y'_2, y''_2\in Y$ such that $y'_2|_{S_2}=y''_2|_{S_2}$ and 
$$
|f(x,y'_2)-f(x,y''_2)| > \varepsilon_0
$$
for every $x\in U_2$, and put $U_2 = \tau(U_0, V_1, U_1, V_2)$.

Continuing this process to infinity we obtain sequences $(U_n)_{n=1}^{\infty}$ and $(V_n)_{n=1}^{\infty}$ of open in $X$ nonempty sets $U_n$ and $V_n$, $(y'_n)_{n=1}^{\infty}$ and $(y''_n)_{n=1}^{\infty}$ of points $y'_n, y''_n \in Y$ and an increasing sequence $(S_n)_{n=1}^{\infty}$ of countable sets 
$S_n\subseteq T$ such that for every $n\in\mathbb N$ the following conditions:

$(a)$\,\,\,\,$U_n\subseteq V_n\subseteq U_{n-1}$;

$(b)$\,\,\,\,$y'_n|_{S_n} = y''_n|_{S_n}$;

$(c)$\,\,\,\,$|f_n(x,y')-f_n(x,y'')| < \frac{1}{n}$ for every $x\in U_n$ and $y',y''\in Y$ with $y'|_{S_n}=y''|_{S_n}$;

$(d)$\,\,\,\,$|f(x,y'_n)-f(x,y''_n)| > \varepsilon_0$ for every $x\in U_n$;

\noindent are true.

Since the space $X$ is Baire, the strategy $\tau$ is not a winning strategy for $\beta$. Thus, there exist a game in which the player loses playing according to $\tau$. In other words, there exist corresponding sequences with the properties $(a)$, $(b)$,$(c)$ and $(d)$ such that $\bigcap\limits_{n=1}^{\infty} U_n \ne \O$.

Take a point $x_0\in \bigcap\limits_{n=1}^{\infty} U_n$ and put $S_0 = \bigcup\limits_{n=1}^{\infty} S_n $. Let a function $g: X\times Y\to\mathbb R$ is the pointwise limit of a subsequence $(f_{n_k})_{k=1}^{\infty}$ of sequence $(f_n)_{n=1}^{\infty}$. Taking into account that $S_{n_k}\subseteq S_0$ and 
$x_0\in U_{n_k}$ for every $k\in \mathbb N$, and going to the limit in the condition $(c)$ by $k\to\infty$, we obtain that 
$g(x_0,y')=g(x_0,y'')$ for every $y',y''\in Y$ with $y'|_{S_0}=y''|_{S_0}$. Thus, for every pointwise limit $g$ of a sequence $(f_{n_k})_{k=1}^{\infty}$ its vertical section $g^{x_0}:Y\to\mathbb R$, $g^{x_0}(y)=g(x_0,y)$, concentrated on $S_0$. Since pointwise limit of a sequence of functions $g_n:Y\to\mathbb R$, which concentrated on set $S_0$, concentrated on $S_0$ too, the vertical section $f^{x_0}:Y\to\mathbb R$, $f^{x_0}(y)=f(x_0,y)$, of the function $f$ concentrated on $S_0$. According to Proposition 5.1 there exists a set $T_0\subseteq S_0$ such that $|f(x_0,y')-f(x_0,y'')| \leq \varepsilon_0$ for every $y',y''\in Y$ with $y'|_{T_0}=y''|_{T_0}$. Since the sequence $(S_n)_{n=1}^{\infty}$ increase, there exists an integer $n_0\in \mathbb N$ such that $T_0\subseteq S_{n_0}$. Then it follows from the condition $(b)$ and the choice $T_0$ that $|f(x_0,y'_{n_0})-f(x_0,y''_{n_0})| \leq\varepsilon_0$. On other hand, according to $(d)$, we have $|f(x_0,y'_{n_0})-f(x_0,y''_{n_0})| >\varepsilon_0$, a contradiction.
\end{proof}

The following corollary gives the positive answer to Question 1.2.

\begin{corollary} Every Baire weakly Moran space is a Namioka space.
\end{corollary}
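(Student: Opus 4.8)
The plan is to derive this from Theorem~\ref{th:5.3}, whose only hypothesis beyond separate continuity and Baire measurability is that the compact factor be presented as a subspace of a power of the real line. So let $X$ be a Baire weakly Moran space, let $Y$ be a compact space, and let $f\colon X\times Y\to\mathbb R$ be a separately continuous function; I must exhibit a dense $G_\delta$-set in $X$ at every point of whose product with $Y$ the function $f$ is jointly continuous. Since $X$ is weakly Moran, $f$ is a Baire measurable function by the very definition of a weakly Moran space, so the first hypothesis of Theorem~\ref{th:5.3} is already in hand.

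The remaining issue is purely one of the presentation of $Y$. By the Tychonoff embedding theorem, $Y$ is homeomorphic to a compact subspace $\widetilde Y$ of $\mathbb R^{T}$ for a suitable index set $T$; fix such a homeomorphism $h\colon Y\to\widetilde Y$ and set $\widetilde f = f\circ(\mathrm{id}_X\times h^{-1})\colon X\times\widetilde Y\to\mathbb R$. Because $\mathrm{id}_X\times h^{-1}$ is a homeomorphism of $X\times\widetilde Y$ onto $X\times Y$, the function $\widetilde f$ is again separately continuous and again Baire measurable, and a point $(x,y)$ is a point of joint continuity of $f$ precisely when $(x,h(y))$ is a point of joint continuity of $\widetilde f$. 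In particular $f$ has the Namioka property if and only if $\widetilde f$ does.

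Now Theorem~\ref{th:5.3} applies directly to the Baire space $X$, the compact space $\widetilde Y\subseteq\mathbb R^{T}$, and the separately continuous Baire measurable function $\widetilde f$, and yields that $\widetilde f$ has the Namioka property; hence so does $f$. As $Y$ was an arbitrary compact space and $f$ an arbitrary separately continuous function on $X\times Y$, the space $X$ is a Namioka space, which is the assertion of the corollary.

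I expect no serious obstacle: the substance is entirely contained in Theorem~\ref{th:5.3}, and the only thing one must verify is the invariance of separate continuity, of Baire measurability, and of the Namioka property under the coordinate homeomorphism $\mathrm{id}_X\times h$, which is routine. (One could instead bypass the embedding by reproving Theorem~\ref{th:5.3} for an abstract compact $Y$, replacing the countable coordinate sets $S_0\subseteq T$ by countable families of continuous real-valued functions on $Y$ and invoking the corresponding abstract forms of Propositions~\ref{p:5.1} and~\ref{p:5.2}; but passing through the Tychonoff cube is the shorter route.)
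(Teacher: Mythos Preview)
Your proof is correct and is essentially the paper's own approach: the corollary is stated in the paper without proof, as an immediate consequence of Theorem~\ref{th:5.3}, and the only point you make explicit---embedding the compact (Hausdorff) space $Y$ into a Tychonoff cube so that the hypothesis $Y\subseteq\mathbb R^T$ is met---is the evident reduction the author leaves to the reader.
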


\bibliographystyle{amsplain}

\end{document}